\def \W {\mathcal{W}}
\def \bW {\overline{\mathcal{W}}}
\def \N {\mathcal{N}}
\def \bN {\overline{\mathcal{N}}}
\def \dist {{\rm dist}}
\DeclareMathOperator{\Rm}{Rm}
\DeclareMathOperator{\Ric}{Ric}
\def \Rmin {R_{\operatorname{min}}}
\newcommand*{\rom}[1]{\rm {\expandafter\@slowromancap\romannumeral #1@}}
\numberwithin{equation}{section}
\newtheorem{Theorem}{Theorem}[section]
\newtheorem{Proposition}[Theorem]{Proposition}
\newtheorem{Lemma}[Theorem]{Lemma}
\newtheorem{Corollary}[Theorem]{Corollary}
\theoremstyle{definition}
\title{A local Sobolev inequality on Ricci flow and its applications}
\author{Pak-Yeung Chan, Zilu Ma, Yongjia Zhang}
\begin{document}

\maketitle
\begin{abstract}
    In this article, we prove a local Sobolev inequality for complete Ricci flows. Our main result is that the local $\nu$-functional of a disk on a Ricci flow depends only on the Nash entropy based at the center of the disk, and consequently depends only on the volume of the disk. Furthermore, we introduce some applications of this local Sobolev inequality. These applications reveal the way in which the local geometry evolves along Ricci flow. In particular, we show that several classical theorems related to Perelman's monotonicity formula can be derived from our results.
\end{abstract}
\setcounter{tocdepth}{1}
\tableofcontents

\section{Introduction}

\subsection{Local Sobolev inequalities on Ricci flow}

As interpreted by Q.~Zhang \cite{Zhq07, Zhq10}, Perelman's entropy functional \cite{Per02} is a family of logarithmic Sobolev inequalities in disguise. By applying Perelman's monotonicity formula, Zhang \cite{Zhq07} proved a uniform Sobolev inequality for compact Ricci flows. The main idea of his proof, intuitively speaking, is that Perelman's $\nu$-functional (which is also a Sobolev constant; see the definition in Section 2) is monotonically increasing along a Ricci flow. Hence, for any compact Ricci flow $(M^n,g_t)_{t\in[0,T)}$, we have
$$\nu(g_t)\geq \nu(g_0)\quad\text{ for all }\quad t\in [0,T),$$
and consequently the Sobolev constant on each time-slice is determined by the geometry of the initial metric $g_0$.

This idea has also been adapted by the authors to prove a uniform Sobolev inequality for ancient Ricci flows. Recall that an ancient Ricci flow ``starts'' from $t=-\infty$, and hence there is no initial metric to begin with. We proved that, however, by improving Bamler's entropy estimates \cite{Bam20a} on ancient Ricci flows, Perelman's asymptotic shrinking gradient Ricci soliton \cite[Proposition 11.2]{Per02} works equally well as an ``initial manifold'', with which we may apply Q.~Zhang's argument; this leads to an initial version of the uniform Sobolev inequality for ancient Ricci flows \cite{CMZ21a}, where we assumed that the ancient Ricci flow in question has an asymptotic shrinking gradient Ricci soliton in the sense of Perelman \cite[Proposition 11.2]{Per02}.

Later, we improved this uniform Sobolev inequality in \cite{CMZ21b}. We found that one does not need to assume the existence of Perelman's asymptotic shrinking gradient Ricci soliton, since, as is proved by Bamler \cite{Bam20c}, every ancient Ricci flow with bounded Nash entropy has a tangent flow at infinity, which is a metric soliton regular almost everywhere. It turns out that this metric soliton can also serve as an ``initial manifold'' for the purpose of applying Q.~Zhang's argument. Hence, the main result in \cite{CMZ21b} says that every ancient Ricci flow with bounded Nash entropy has uniformly bounded $\nu$-functional, and therefore on it holds a uniform Sobolev inequality. With a different approach, Li-Wang \cite{LW20} also proved a uniform Sobolev inequality on shrinking gradient Ricci solitons, which are special ancient Ricci flows; their result is nice in that it does not assume any curvature condition on the shrinking gradient Ricci soliton.

All the results mentioned above are global Sobolev inequalities on the whole manifold, and their validity requires some explicit or implicit assumptions on the geometry of the Ricci flow. Q.~Zhang's theorem assumes the manifold to be compact. Li-Wang's result holds only on shrinking gradient Ricci solitons. Our Sobolev inequality requires the ancient solution to have time-wise bounded curvature and uniformly bounded Nash entropy; these two assumptions also imply that the ancient solution in question has a uniform lower bound for unit balls on each time-slice. However, in the most general case, the existence of a Ricci flow is guaranteed by the boundedness of curvature of the initial manifold alone (c.f. \cite{Sh89}), and the curvature remains bounded until the first singular time (c.f. \cite{Ham93}). Therefore, it makes much sense to consider a complete Ricci flow with bounded curvature on compact time intervals, without making any further geometric assumptions, such as a lower bound for the volume of unit balls. The first main theorem in this article provides some analytic tools for such a Ricci flow---we show that the local Sobolev constant depends only on the bound of the Nash entropy or on the local volume ratio. 

Most of the definitions in the statements of results  can be found in Section 2 below. Throughout this paper, we shall always consider a complete Ricci flow $(M^n,g_t)_{t\in I}$, where $I$ is an interval, with \emph{bounded curvature on each compact time interval}; this assumption is made purely for the technical convenience. We denote by $B_t(x,r)$, where $t\in I$, the ball centered at $x\in M$, of radius $r>0$, and with respect to the metric $g_t$, and by $|\,\cdot\,|_t$ the volume of a set measured with $g_t$.

\begin{Theorem}
\label{thm: nu ge nash}
Assume that $[-r^2,0]\subseteq I$.
Then for any point $x_0\in M$ and any $\tau$, $A>0,$ we have
\begin{equation}\label{localnu}
    \nu\left(B_0(x_0,Ar), g_0, \tau r^2\right)
    \ge \N_{x_0,0}(r^2) - \sqrt{n}A-\frac{n}{2}\tau - \frac{n}{2}\log(1+\tau).
\end{equation}
\end{Theorem}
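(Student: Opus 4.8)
The plan is to push a lower bound for Perelman's $\bW$–functional backward along the conjugate heat flow to time $-r^2$, where the conjugate heat kernel based at $(x_0,0)$ — and hence $\N_{x_0,0}(r^2)$ — naturally enters. By definition of $\nu(B_0(x_0,Ar),g_0,\tau r^2)$ it suffices to fix $s\in(0,\tau r^2]$ and a test function $v\ge 0$ with $\sqrt v\in C_0^\infty(B_0(x_0,Ar))$ and $\int_M v\,dg_0=1$, and to bound $\bW(g_0,v,s)$ from below by the right-hand side of \eqref{localnu}; the hypothesis $[-r^2,0]\subseteq I$ together with the standing bounded-curvature assumption makes the conjugate heat kernel available on $[-r^2,0]$ and justifies \eqref{W monotone}.

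\textbf{Transporting the bound and the parameter shift.} Let $v_t$ solve the conjugate heat equation on $[-r^2,0]$ with $v_0=v$; then $\int_M v_t\,dg_t=1$ and $v_t>0$ for $t<0$. Running \eqref{W monotone} with the parameter $\sigma_t:=s-t$ (so $\sigma_0=s$, $\sigma_{-r^2}=s+r^2$) gives
$$\bW(g_0,v,s)\ \ge\ \bW\!\big(g_{-r^2},v_{-r^2},s+r^2\big).$$
Because only the interval $[-r^2,0]$ is used, this holds for arbitrary $\tau>0$, at the cost of the shift $s\mapsto s+r^2$. Rewriting $\bW(\,\cdot\,,\cdot\,,s+r^2)$ in terms of $\bW(\,\cdot\,,\cdot\,,r^2)$, the change in the logarithmic term contributes $-\tfrac n2\log(1+s/r^2)\ge-\tfrac n2\log(1+\tau)$, and the remaining $s$–linear term, estimated with $s\le\tau r^2$ and Perelman's pointwise inequality $2\sigma\Delta f_\sigma-\sigma|\nabla f_\sigma|^2+\sigma R+f_\sigma-n\le 0$ for the conjugate heat kernel potential, contributes $-\tfrac n2\tau$.

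\textbf{Comparing with the kernel at $(x_0,0)$.} Write $K(x_0,0\,|\,\cdot,t)=(4\pi(-t))^{-n/2}e^{-f_t}$ and $\rho:=v_{-r^2}/K(x_0,0\,|\,\cdot,-r^2)$, and expand
$$\bW\!\big(g_{-r^2},v_{-r^2},s+r^2\big)=(s+r^2)\!\int_M\!\big(|\nabla\log v_{-r^2}|^2+R\big)v_{-r^2}\,dg_{-r^2}-\!\int_M\! v_{-r^2}\log v_{-r^2}\,dg_{-r^2}-\tfrac n2\log\!\big(4\pi(s+r^2)\big)-n.$$
Since $v_{-r^2}$ is an average of the kernels $K(y,0\,|\,\cdot,-r^2)$ over $y\in B_0(x_0,Ar)$, the concavity of the Boltzmann entropy under such averages together with the Lipschitz-in-distance estimate $\N_{y,0}(r^2)\ge\N_{x_0,0}(r^2)-\sqrt nA$ (valid because $d_{g_0}(y,x_0)\le Ar$ and the scale is $r$) bounds $-\int_M v_{-r^2}\log v_{-r^2}\,dg_{-r^2}$ below by $\N_{x_0,0}(r^2)-\sqrt nA$ plus dimensional constants. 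For the Dirichlet integral one uses $\log v_{-r^2}=\log K(x_0,0\,|\,\cdot,-r^2)+\log\rho$ and integration by parts to obtain $\int_M(|\nabla\log v_{-r^2}|^2+R)v_{-r^2}=\int_M(2\Delta f_{-r^2}-|\nabla f_{-r^2}|^2+R)v_{-r^2}+\int_M|\nabla\log\rho|^2v_{-r^2}$, bounds the first term by Perelman's pointwise estimate, and offsets the relative entropy $\int_M v_{-r^2}\log\rho$ that appears against $\int_M|\nabla\log\rho|^2v_{-r^2}$ using the logarithmic Sobolev inequality satisfied by the conjugate heat kernel measure $\nu_{x_0,0\,|\,-r^2}$. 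Assembling the constants yields \eqref{localnu}.

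\textbf{Main obstacle.} The delicate step is the last one: making the Dirichlet term and the parameter shift produce exactly $-\sqrt nA-\tfrac n2\tau-\tfrac n2\log(1+\tau)$ with \emph{no} quantitative curvature or noncollapsing input. One cannot simply discard the Fisher information $\int_M|\nabla\log\rho|^2v_{-r^2}$ of the relative density — it is precisely what compensates for $\int_M(|\nabla\log v_{-r^2}|^2+R)v_{-r^2}$ becoming small when $A$ is large — so the balancing must be done carefully, via Perelman's pointwise $\bW$–density estimate, the log-Sobolev inequality for $\nu_{x_0,0\,|\,-r^2}$, and the auxiliary quantity $\psi(t):=\int_M v_tf_t\,dg_t$, whose behaviour as $t\uparrow0$ is read off from the on-diagonal heat-kernel asymptotic. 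By contrast, the concavity of the Boltzmann entropy under superposition and the distance-Lipschitz estimate for the Nash entropy are comparatively routine inputs.
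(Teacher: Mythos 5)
The core mechanism you identify is correct and is exactly what the paper uses: average $v_{-r^2}$ as a superposition $\int K(y,0\,|\,\cdot,-r^2)\,d\mu_0(y)$, apply Jensen's inequality (concavity of Boltzmann entropy), and compare $\N_{y,0}(r^2)$ to $\N_{x_0,0}(r^2)$ via Bamler's $W_1$-Harnack inequality with the scalar curvature lower bound coming from the maximum principle. The monotonicity push-back $\bW(g_0,v,s)\geq\bW(g_{-r^2},v_{-r^2},s+r^2)$ is also fine. The gap is in the step after that: you now need to lower-bound the full $\bW$-functional at time $-r^2$, which contains the Dirichlet term $\int(|\nabla\log v_{-r^2}|^2+R)v_{-r^2}\,dg_{-r^2}$, and this is not a quantity that the Jensen/Harnack mechanism controls. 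Discarding $|\nabla\log v_{-r^2}|^2\geq 0$ and bounding $R\geq -n/2$ produces an extra $-n$ on top of the constant $-\frac n2\tau-\frac n2\log(1+\tau)$ in the theorem. Your proposed repairs do not close this: Perelman's pointwise inequality $\sigma(2\Delta f-|\nabla f|^2+R)+f-n\leq 0$ holds for the conjugate heat kernel potential $f$ and gives an \emph{upper} bound for the first piece in your decomposition, whereas $\bW$ needs a \emph{lower} bound; and the logarithmic Sobolev inequality for the measure $\nu_{x_0,0\,|\,-r^2}$ is equivalent to a lower bound on $\W_{x_0,0}(r^2)$, which we do not have a priori (we only control $\N_{x_0,0}$, and $\N\geq\W$ goes the wrong way), so that step is circular.

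The paper avoids estimating the Dirichlet term at time $-r^2$ altogether. Defining $\bN(t)=\int f_t\,d\mu_t-\frac n2$ and $\bW(t)$ for the test-function flow, it uses the identity $-\frac{d}{dt}\left(\tau_t\bN(t)\right)=\bW(t)$ together with the monotonicity $\bW(t)\leq\bW(0)$ to get
\[
\tau_{-1}\bN(-1)-\tau_0\bN(0)=\int_{-1}^{0}\bW(t)\,dt\leq\bW(0),
\]
and then bounds $\bN(0)\leq\bW(0)+\frac n2(1+\tau)$ using $|\nabla f_0|^2\geq 0$ and $R_{g_0}\geq -\frac n2$. Combining these two gives $\bW(0)\geq\bN(-1)-\frac n2\tau$, so all that remains is to lower-bound $\bN(-1)$ — a pure entropy quantity with no Dirichlet term — and that is exactly where your Jensen/Harnack argument applies cleanly, yielding $\bN(-1)\geq\N_{x_0,0}(1)-\sqrt nA-\frac n2\log(1+\tau)$. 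So the missing ingredient in your proposal is the passage from $\bW$ at time $0$ to $\bN$ (not $\bW$) at time $-r^2$ via the integral identity for $\tau_t\bN(t)$; once this is in place the constants in \eqref{localnu} come out sharp.
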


Applying a standard argument (c.f. \cite{Zhq10, Ye15}), we may rewrite the above local bound of Perelman's $\nu$-functional as the following local Sobolev inequality.

\begin{Corollary}\label{Corollary}
Under the same assumptions as Theorem \ref{thm: nu ge nash}, we have that for any $n\ge 3$, there is a dimensional constant $c=c(n)$, such that for any $u\in W^{1,2}_0(B_0(x_0,Ar))$, it holds that
\begin{align}\label{Sobolevineq}
\left(\int_{B_0(x_0,Ar)}|u|^{\frac{2n}{n-2}}\,dg_0\right)^{\frac{n-2}{n}}
  \leq c e^{-\frac{2\nu_0}{n}-c\tau r^2R_{\operatorname{min}}} \int_{B_0(x_0,Ar)} \bigg(4|\nabla u|^2
  +\left( R_{g_0}-R_{\operatorname{min}}+\frac{c}{\tau r^2}\right)u^2 \bigg)\,dg_0, 
\end{align}
where $\nu_0=\N_{x_0,0}(r^2) - \sqrt{n}A-\frac{n}{2}\tau - \frac{n}{2}\log(1+\tau)$ and $R_{\operatorname{min}}=\min\left\{\inf_{B_0(x_0,Ar)} R_{g_0},0\right\}\geq-\frac{n}{2r^2}$.
\end{Corollary}

Previously, Q.~Zhang \cite[Theorem 6.3.2]{Zhq10} also proved a local Sobolev inequality assuming local curvature and volume bounds. We shall show that Zhang's theorem is actually a special case of our local Sobolev inequality above.

\begin{Corollary}[Q.~Zhang's local Sobolev inequality {\cite[Theorem 6.3.2]{Zhq10}}]\label{Qzhang}
Assume that $[0,r^2]\subset I$. Let $x_0\in M$ be a point and $A>0$ be a positive number satisfying 
\begin{gather}\label{localgeometrybounds}
|B_0(x_0,r)|_0\geq A^{-1}r^n,
\\\nonumber
|{\Ric}|\leq \frac{1}{(nr)^2}\quad\text{ on }\quad B_0(x_0,r)\times[0,r^2].
\end{gather}
Then we have 
$$\nu\left(B_{r^2}(x_0,Ar),g_{r^2},A^2r^2\right)\geq-C(n,A).$$
\end{Corollary}

Bamler \cite[Theorem 8.1]{Bam20a} proved that the Nash entropy based at a point can be estimated by the local volume ratio. Therefore, we can rewrite Theorem \ref{thm: nu ge nash} and Corollary \ref{Corollary} as follows.

\begin{Corollary}\label{Corollary2}
Assume that $[-r^2,0]\subseteq I$. Then for any positive numbers $\alpha$, $\tau$, and $A$, the following holds. For any $x_0\in M$ satisfying
\[
    |B_{0}(x_0,r)|_{0}
    \ge \alpha r^n,
\]
we have
\[
    \nu(B_{0}(x_0,A r),g_0, \tau r^2)\ge \log\alpha - 4n(A+\tau)-C(n).
\]
Furthermore, (\ref{Sobolevineq}) also holds with $\nu_0=\log\alpha - 4n(A+\tau)-C(n)$ instead. As a standard consequence, the ball $B_0(x_0,Ar)$ is strongly noncollapsed at scale $Ar$, i.e., there is a constant $\kappa=\kappa(n,\alpha,A)>0,$ such that for any $\rho\in(0,Ar]$ and $y_0\in B_{0}(x_0,Ar)$, it holds that
\[
    \sup_{B_0(y_0,\rho)} R_{g_0}\le \rho^{-2} \implies |B_0(y_0,\rho)|_{0} \ge \kappa \rho^n.
\]
\end{Corollary}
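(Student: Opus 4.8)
The plan is to obtain all three assertions by combining the tools already in place: Theorem~\ref{thm: nu ge nash}, the local Sobolev inequality \eqref{Sobolevineq} of the Corollary just stated, and Bamler's lower bound \cite[Theorem~8.1]{Bam20a} for the Nash entropy in terms of the local volume ratio. For the $\nu$-bound, apply \cite[Theorem~8.1]{Bam20a} at $(x_0,0)$ and scale $r$: the hypothesis $|B_0(x_0,r)|_{g_0}\ge\alpha r^n$ gives $\N_{x_0,0}(r^2)\ge\log\alpha-C(n)$ for a dimensional $C(n)$. Substituting into \eqref{localnu} and using the crude bounds $\sqrt n\,A\le 4nA$ and $\tfrac n2\tau+\tfrac n2\log(1+\tau)\le n\tau\le 4n\tau$ yields $\nu(B_0(x_0,Ar),g_0,\tau r^2)\ge\log\alpha-4n(A+\tau)-C(n)$, which is the first claim (with the same $C(n)$).

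For the Sobolev inequality, recall (as already recorded in the statement, via $\partial_t R\ge\Delta R+\tfrac2n R^2$ and the maximum principle on $[-r^2,0]$) that $R_{\operatorname{min}}\ge-\tfrac{n}{2r^2}$. The preceding Corollary gives \eqref{Sobolevineq} with $\nu_0=\N_{x_0,0}(r^2)-\sqrt n\,A-\tfrac n2\tau-\tfrac n2\log(1+\tau)$; since the right-hand side of \eqref{Sobolevineq} depends on $\nu_0$ only through the increasing factor $e^{-2\nu_0/n}$, the inequality remains valid if $\nu_0$ is replaced by any smaller number. By the first claim we may therefore take $\nu_0=\log\alpha-4n(A+\tau)-C(n)$, which is the second claim.

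For the strong noncollapsing I would fix $\tau$ once and for all (say $\tau=1$), so that \eqref{Sobolevineq} becomes a Sobolev inequality on $B_0(x_0,Ar)$ with constant $C_S=C_S(n,\alpha,A)$, valid in particular for every $u\in W^{1,2}_0(B_0(y_0,s))$ whenever $B_0(y_0,s)\subseteq B_0(x_0,Ar)$. Given $\rho\le Ar$ and $y_0$ with $B_0(y_0,\rho)\subseteq B_0(x_0,Ar)$ and $\sup_{B_0(y_0,\rho)}R_{g_0}\le\rho^{-2}$, insert into \eqref{Sobolevineq} the standard cutoff $\phi$ with $\phi\equiv1$ on $B_0(y_0,\rho/2)$, $\spt\phi\subseteq B_0(y_0,\rho)$, and $|\nabla\phi|_{g_0}\le C_0/\rho$ for an absolute constant $C_0$. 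The left side is $\ge|B_0(y_0,\rho/2)|_{g_0}^{(n-2)/n}$; on the right, $\int 4|\nabla\phi|^2\le 4C_0^2\rho^{-2}|B_0(y_0,\rho)|_{g_0}$, and on $\spt\phi$ one has $R_{g_0}-R_{\operatorname{min}}+\tfrac{c}{\tau r^2}\le\rho^{-2}+\tfrac{n}{2r^2}+\tfrac{c}{\tau r^2}\le C_1(n,A)\rho^{-2}$, since $\rho\le Ar$ forces $r^{-2}\le A^2\rho^{-2}$. Hence $|B_0(y_0,\rho/2)|_{g_0}^{(n-2)/n}\le K\rho^{-2}|B_0(y_0,\rho)|_{g_0}$ with $K=K(n,\alpha,A)$. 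Setting $v(s):=|B_0(y_0,s)|_{g_0}/s^n$, the scale-invariant factors $\rho^{n-2}$ cancel and this reads $v(\rho/2)\le L\,v(\rho)^{n/(n-2)}$ with $L=2^nK^{n/(n-2)}$; moreover the same inequality holds with $\rho$ replaced by any $\rho/2^k$, because the hypotheses $B_0(y_0,\rho/2^k)\subseteq B_0(x_0,Ar)$ and $\sup_{B_0(y_0,\rho/2^k)}R_{g_0}\le(\rho/2^k)^{-2}$ persist. If $v(\rho)<\kappa:=L^{-(n-2)/2}$, then $w_k:=L^{(n-2)/2}v(\rho/2^k)$ satisfies $w_{k+1}\le w_k^{n/(n-2)}$ with $w_0<1$, so $w_k\to 0$ and hence $v(\rho/2^k)\to 0$, contradicting the fact that $|B_0(y_0,s)|_{g_0}/s^n$ converges to a positive dimensional constant (the Euclidean value) as $s\to 0^+$. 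Thus $v(\rho)\ge\kappa$, i.e. $|B_0(y_0,\rho)|_{g_0}\ge\kappa\rho^n$ with $\kappa=\kappa(n,\alpha,A)$.

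The main obstacle is exactly this last point: a single application of \eqref{Sobolevineq} only controls $|B_0(y_0,\rho/2)|$ in terms of $|B_0(y_0,\rho)|$, not $|B_0(y_0,\rho)|$ by itself, so one step cannot conclude; the dyadic iteration above, closed off against the Euclidean volume-ratio limit, is what repairs the gap. Everything else is bookkeeping: eliminating the $\tau$-dependence by fixing $\tau$, verifying the exponent cancellation in passing to $v(\rho/2)\le L\,v(\rho)^{n/(n-2)}$, and, if one wants the statement for $n=2$ as well, replacing the exponent $\tfrac{2n}{n-2}$ by any fixed exponent $>2$ in an analogous Sobolev inequality (the argument then being easier).
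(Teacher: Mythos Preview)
Your argument follows essentially the same route as the paper: bound the Nash entropy from below via \cite[Theorem~8.1]{Bam20a} using the volume hypothesis, then feed this into Theorem~\ref{thm: nu ge nash}. One technical point you should adjust: the paper invokes Bamler's volume--entropy estimate at scale $r^2/2$ rather than $r^2$, because that estimate requires a uniform scalar curvature lower bound on the backward time interval, and \eqref{Rlowerbound} only gives $R_{g_s}\ge -n/r^2$ on $[-r^2/2,0]$ (the bound $-\tfrac{n}{2(s+r^2)}$ blows up at $s=-r^2$). So one obtains $\N_{x_0,0}(r^2/2)\ge\log\alpha-C(n)$ and then applies Theorem~\ref{thm: nu ge nash} with $r$ replaced by $r/\sqrt{2}$ (and $A,\tau$ by $\sqrt{2}A,2\tau$); the constants still collapse to $4n(A+\tau)+C(n)$. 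Since Nash entropy is decreasing in $\tau$, your direct claim $\N_{x_0,0}(r^2)\ge\log\alpha-C(n)$ does not follow from the bound at $r^2/2$, so this adjustment is necessary rather than cosmetic.

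The paper says nothing beyond ``straightforward consequence'' for the noncollapsing; your dyadic iteration of the Sobolev inequality against the Euclidean small-ball limit is a correct and standard way to supply that detail.
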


\subsection{The ``reaction-diffusion'' property of the Ricci flow}

The Ricci flow is a weakly parabolic quasilinear partial differential equation, which is sometimes known as a reaction-diffusion system in other fields of study. In a pure diffusion process, Fick's first law of diffusion says that the flux of a chemical substance always goes from regions of high concentration to regions of low concentration, and eventually, the substance tends to be distributed evenly everywhere. Such phenomenon can also be observed in the Ricci flow, especially when the initial metric is good enough. In this respect, see, for instance, \cite{BW08,BS09,Ham82,Ham86}.

In general, when the Ricci flow is noncompact without good initial condition, it is less clear how the reaction-diffusion mechanism operates. One may simply regard the Ricci flow as a background and consider the heat equation or the conjugate heat equation. For instance, the (conjugate) heat kernel coupled with the Ricci flow has some Gaussian upper and lower estimates \cite[Theorem 26.25, Theorem 26.31]{RFV3}. However, these estimates are rather coarse, and they depend not so much on the parabolicity of the Ricci flow itself than on that of the (conjugate) heat equation. What, then, should we expect from the Ricci flow as a reaction-diffusion system? Whither is the ``geometric information'' diffused?

A hint of the answer can already be seen from the works of Perelman \cite{Per02,Per03}. For instance, on a noncollapsed ancient solution with nonnegative curvature operator, one can always observe a Ricci shrinker arising along the path of $\ell$-centers, namely, the points where Perelman's $\ell$-function does not exceed $\frac{n}{2}$; see \cite[Definition 2.6]{CMZ21a}. Furthermore, Perelman proved the noncollapsing property at a point $(x,t)$ after surgery time by estimating the geometry near its $\ell$-center $(z,s)$ before surgery time. If the geometry near $(z,s)$ can be controlled, then the geometry near $(x,t)$ can be controlled accordingly. These facts indicate that the ``geometric information'' is diffused along the path of $\ell$-centers.

Recently, by a delicate estimate on the conjugate heat kernel, Bamler \cite{Bam20a} discovered a property of the Ricci flow called $H_n$-concentration---a conjugate heat kernel on a Ricci flow is centered around some points, called $H_n$-centers (\cite[Definition 3.10]{Bam20a}), in the same way an Euclidean Gaussian heat kernel is centered around its base point. However, in contrast to the case of heat equation on the Euclidean space, an $H_n$-center is not a fixed point in space. In a  pure diffusion process, if the initial distribution of a chemical substance is a Dirac delta function at the origin, then the substance is diffused forward in time and outward in space, or in other words, loosely speaking, along the family of parabolic neighborhoods of the form $B(O,r)\times[0,r^2]$, $r>0$. Analogous to the diffusion process in the Euclidean space, the Ricci flow bears a similar property along parabolic neighborhoods constructed around $H_n$-centers, called $P^*$-parabolic neighborhoods. This can be seen from Bamler's Harnack inequality of the Nash entropy (c.f. Theorem \ref{Harnack}); this result indicates that the difference of Nash entropy based at two different points is determined by the ``$P^*$-distance'' between the base points. 

As 
supported by some previous results (\cite[Theorem 6.1, Theorem 6.2, Theorem 8.1]{Bam20a}, \cite[Theorem 1.8]{CMZ21a}), the Nash entropy is closely related to the local geometry. We shall come back to this point later, and it suffices us now to view it in combination with the observation in the previous paragraph: as we shall see in the following corollary, the geometric information around one point in space-time determines the geometric information in $P^*$-parabolic neighborhoods centered at this point.

\begin{Corollary}\label{Corollary3}
Assume that $[-r^2,0]\subseteq I$. Let $x_0\in M$ and $A$, $\tau>0$. Then for any $(y,s)\in P^*(x_0,0\,|\, Ar,0,A^2r^2)$, we have
\[
    \nu(B_{s}(y,A r),g_s, \tau r^2)\ge 
    \N_{x_0,0}(r^2) - 2\sqrt{n}A-\frac{n}{2}\tau - \frac{n}{2}\log(1+\tau)-\frac{n}{2}\log(1+A^2).
\]
As a consequence, there is a constant $\kappa=\kappa(n,A)>0$ such that for any ball $B:=B_s(y,\rho)$  with $(y,s)\in P^*(x_0,0\,|\,Ar,0,A^2r^2)$ and $\rho\in(0, Ar]$, we have
\[
    \sup_B R_{g_s}\le \rho^{-2} \implies |B|_{s} \ge \kappa \rho^n.
\]
\end{Corollary}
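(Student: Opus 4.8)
The plan is to reduce everything to two tools already in hand: Theorem~\ref{thm: nu ge nash}, but centered at $(y,s)$ instead of $(x_0,0)$, and Bamler's Harnack inequality for the Nash entropy, Theorem~\ref{Harnack}, which will let us compare $\N_{y,s}$ with $\N_{x_0,0}$; the non-collapsing statement will then be extracted from the resulting local Sobolev inequality exactly as in Corollary~\ref{Corollary2}. First, since $(y,s)\in P^*(x_0,0\,|\,Ar,0,A^2r^2)$ forces $s\in I\cap[0,A^2r^2]$ and $[-r^2,0]\subseteq I$, the interval $[s-r^2,s]$ lies in $I$; applying Theorem~\ref{thm: nu ge nash} with base point $(y,s)$ and radius $r$ gives
\[
\nu\big(B_s(y,Ar),g_s,\tau r^2\big)\ \ge\ \N_{y,s}(r^2)-\sqrt n\,A-\tfrac n2\tau-\tfrac n2\log(1+\tau).
\]

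It remains to bound $\N_{y,s}(r^2)$ from below in terms of $\N_{x_0,0}(r^2)$. Since the flow exists on $[-r^2,0]$ with bounded curvature there, the minimum principle applied to $\partial_t R=\Delta R+2|\Ric|^2\ge\Delta R+\tfrac 2n R^2$ gives $R_{g_0}\ge-\tfrac n{2r^2}$ on all of $M$. We then invoke Theorem~\ref{Harnack} with the lower time taken to be $-r^2$, with $t^*=0$, $R_{\min}=-\tfrac n{2r^2}$, $(x_1,t_1)=(x_0,0)$, and $(x_2,t_2)=(y,s)$; the requirement that the lower time be $<t^*\le\min\{t_1,t_2\}$ reads $-r^2<0\le\min\{0,s\}$ and holds because $s\ge0$. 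Here $\big(\tfrac n{2(t^*-(-r^2))}-R_{\min}\big)^{1/2}=(\tfrac n{r^2})^{1/2}$, and by the definition of the $P^*$-neighborhood (with $T^-=0$, so the reference time is $t_0-T^-=0$) we have $\dist^{g_0}_{W_1}(\nu_{x_0,0\,|\,0},\nu_{y,s\,|\,0})<Ar$. Theorem~\ref{Harnack} therefore gives
\[
\N_{x_0,0}(r^2)-\N_{y,s}(s+r^2)\ \le\ \sqrt n\,A+\tfrac n2\log\!\Big(1+\tfrac s{r^2}\Big)\ \le\ \sqrt n\,A+\tfrac n2\log(1+A^2),
\]
the last step using $s\le A^2r^2$. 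Since $\N_{y,s}(\cdot)$ is nonincreasing and $r^2\le s+r^2$ (while $s-r^2\in I$, so $\N_{y,s}(r^2)$ is defined), we get $\N_{y,s}(r^2)\ge\N_{y,s}(s+r^2)\ge\N_{x_0,0}(r^2)-\sqrt n\,A-\tfrac n2\log(1+A^2)$; substituting this into the first display produces exactly the asserted lower bound for $\nu(B_s(y,Ar),g_s,\tau r^2)$.

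For the non-collapsing consequence, the lower bound just obtained for the local $\nu$-functional on $B_s(y,Ar)$ at time $s$ yields, via the standard argument behind the Corollary of Theorem~\ref{thm: nu ge nash}, the local Sobolev inequality (\ref{Sobolevineq}) on $B_s(y,Ar)$; testing it against a standard cutoff supported in a concentric subball $B:=B_s(y,\rho)$ with $\rho\in(0,Ar]$ and $\sup_B R_{g_s}\le\rho^{-2}$ (choosing the free parameter so that $\tau r^2$ is comparable to $\rho^2$, which is admissible since $\rho\le Ar$) forces $|B|_{g_s}\ge\kappa\rho^n$, exactly as in Corollary~\ref{Corollary2}. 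There is no genuinely hard analytic step; the one place demanding care is the application of Theorem~\ref{Harnack}: $t^*$ must be taken equal to $0$ (the only admissible choice once $0\le s$), a \emph{global} lower bound for $R_{g_0}$ must be supplied---hence the use of the minimum principle and of the hypothesis $[-r^2,0]\subseteq I$---and one must observe that the Nash entropy of $(y,s)$ is forced to be evaluated at $s+r^2\ge r^2$, so that only the monotonicity of $\N_{y,s}$ in the favorable direction is invoked. It is also worth checking at the outset that each time-slice appearing in the argument ($-r^2$, $0$, $s$, and $s-r^2$) lies in $I$, which is where the interval hypothesis together with $s\le A^2r^2$ is used.
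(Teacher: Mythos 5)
Your proposal is correct and follows essentially the same route as the paper's proof: apply Theorem~\ref{thm: nu ge nash} at the base point $(y,s)$, then compare $\N_{y,s}$ with $\N_{x_0,0}$ via Theorem~\ref{Harnack} (with $t^*=0$, lower time $-r^2$, $R_{\min}=-\tfrac{n}{2r^2}$ from the maximum principle, and the $W_1$-bound coming straight from the definition of the $P^*$-neighborhood), using the monotonicity $\N_{y,s}(r^2)\ge\N_{y,s}(s+r^2)$ to close the loop. The only cosmetic difference is that the paper first rescales to $r=1$; the bookkeeping and the non-collapsing deduction via the local Sobolev inequality match the paper's argument.
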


In spite of the formal nice expression of the above result, however, without any further geometric assumption, the shape of a $P^*$-parabolic neighborhood could be very complicated. In fact, given an arbitrary space-time point $(y,s)\in M\times I$ and an arbitrary time $t\in(s,\infty)\cap I$, one may not be able to find a point $x\in M$, such that $(y,s)$ is (or is at least close to) an $H_n$-center of $(x,t)$. This is due to the property of parabolic systems (i.e. one may not solve a parabolic equation backwards, etc.). This observation is supported by Lai's recent examples \cite{Lai20}. Taking a higher-dimensional noncollapsed Lai's steady soliton, a point $(y,s)$ on the edge, where $s\ll-1$, can never be close to an $H_n$-center of $(x,0)$, where $x$ is any point. This is because along the $H_n$-centers of a fixed point, one always sees an asymptotic shrinking gradient Ricci soliton forming (c.f. \cite{CMZ21a}), which can never arise from the edges of such examples.
In fact, Perelman's reduced distance has a similar problem, namely, given a point in space-time,  this point may not be an $\ell$-center of any point on a later slice.

Another perspective from which we may understand the ``reaction-diffusion'' property of the Ricci flow along $P^*$-parabolic neighborhoods is the following Harnack inequality of the heat equation. Recall that a Harnack inequality of a parabolic equation provides some backward estimate of a heat-type equation. Some results in this respects are \cite{Ca08,Ham93a, LY86}, \cite[\S9]{Per02}, \cite[\S6]{W18}. Interestingly, the differential Harnack inequalities of Cao \cite{Ca08} and Perelman \cite[\S9]{Per02} do not depend on any local geometric condition. Nonetheless, if one wants to apply these differential Harnack inequalities to obtain some backward control, namely, to estimate the earlier slices of a solution using its later slices, then some local geometric assumption is indispensable.

Wang's Harnack inequality \cite[Proposition 6.6]{W18} shows that, for a positive supersolution to the heat equation coupled with Ricci flow, the local bound at a earlier time-slice can be controlled by that of a later slice. This Harnack inequality depends only on a very weak assumption of the Ricci curvature. In fact, inspired by Wang's result, since, as we shall see again later, the geometric information is diffused along the path of $H_n$-centers, and since the local geometric assumption is nothing but that which controls the position of an $H_n$-center, it is reasonable to hope for a Harnack inequality of heat equation along Ricci flow in terms of $P^*$-parabolic neighborhoods. We leave it to the reader to check that, from our theorem below, one can derive B.~Wang's Harnack inequality, or at least a qualitatively similar one; the method is not different from the proof of Corollary \ref{BWang}, which we do present in the sequel.

\begin{Theorem}[A Harnack inequality]\label{HeatHarnack}
Let $[-r^2,0]\subset I$ and let $H:M\times [-r^2,0]$ be a positive supersolution to the heat equation coupled with Ricci flow, namely, $\Box_t H\geq 0$. Let $x_0\in M$ be any point and $(z,-r^2)$ be an $H_n$-center of $(x_0,0)$. Then we have 
\begin{equation*}
    \Phi_0(\Lambda,A)\cdot\min_{\bar B_{-r^2}(z,Ar)}H(\cdot,-r^2)\leq \min_{\bar B_{0}(x_0,\Lambda r)}H(\cdot,0),
\end{equation*}
for all $A>\sqrt{2H_n}+4\sqrt{2\pi}$ and $\Lambda>0$, where
$$\Phi_0(\Lambda,A):=\Phi\left(\tfrac{1}{\sqrt{2}}A-\Lambda-\sqrt{H_n}-4\sqrt{\pi}\right),$$
and $\Phi$ is the function defined in \cite[\S4]{Bam20a} satisfying
$$\Phi'(t)=(4\pi)^{-\frac{1}{2}}e^{-\frac{t^2}{4}},\quad \lim_{t\to-\infty}\Phi(t)=0,\quad \lim_{t\to+\infty}\Phi(t)=1.$$
Here (and throughout the paper) $H_n:=\frac{(n-1)\pi^2}{2}+4$.
\end{Theorem}

\subsection{Local monotonicity and Nash entropy}

 Because of the interdependence of the Nash entropy and the geometry near the base point (\cite[Theorem 6.1, Theorem 8.1]{Bam20a}), and because of the interdependence of the Nash entropy and the geometry near the $H_n$-center (\cite[Theorem 6.2]{Bam20a}, \cite[Theorem 1.8]{CMZ21a}), we know that the geometry near a point in space-time depends on the geometry near its $H_n$-center, and the Nash entropy is that which relates the geometry near these two points; this, indeed, is the philosophy underlying Theorem \ref{thm: nu ge nash}. However, given this fact, we can directly estimate some geometric constants near a point in terms of what we know about its $H_n$-center---instead of going through the Nash entropy. This leads to a local monotonicity theorem.

Some examples in this respect are  \cite[\S 5]{W18} and \cite[\S 2.2]{TZ21}. These results all show how local geometry on later time-slices is dependent on that of earlier time-slices. However, because the property of $H_n$-concentration was not well understood by the time of their publication, these results all make local geometric assumptions in one way or another. In retrospective,  these local geometric assumptions can be regarded as means of controlling the possible position of an $H_n$-center. One can see this point more clearly when one views the proof of Theorem \ref{Theorem2} below (see also \cite{J21}). For other local monotonicity formulas along geometric flows, see, for example, \cite{E01, EKNT08}.

Our local monotonicity formula, on the contrary, does not depend on any local geometric assumptions, since we are making use of the properties of $H_n$-centers observed above. In particular, we show that the local $\mu$-functional around a space-time point is solely determined by the local $\mu$-functional around its $H_n$-center. This result is in the spirit of, though slightly stronger than, \cite[Theorem 5.4]{W18}.

\begin{Theorem}
\label{prop: almost mono}
Assume that $[-r^2,0]\subseteq I$. Then for any $x_0\in M$, any $H_n$-center $(z,-r^2)$ of $(x_0,0)$, any $\tau>0$, and any $A\geq 16$, we have
\[
    \mu\left(B_{{-r^2}}\left(z,2A\sqrt{H_n}r\right), g_{-r^2},(1+\tau)r^2\right)
    \le \mu\left(B_0\left(x_0,A\sqrt{H_n}r\right), g_0, \tau r^2\right)
    +  \frac{C_n}{A^2}e^{-\frac{A^2}{20}},
\]
where $C_n$ is a positive dimensional constant. 
\end{Theorem}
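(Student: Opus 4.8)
The plan is to fix a test function $u$ supported in $B_0(x_0,Ar)$, evolve it backward along the conjugate heat equation from time $0$ to time $-r^2$, and track what happens to the $\bW$-functional and to the support of the evolving measure. Concretely, let $v(\cdot,0)$ be an admissible test function for $\mu(B_0(x_0,Ar),g_0,\tau r^2)$, i.e. $v\ge0$, $\sqrt{v}\in C_0^\infty(B_0(x_0,Ar))$, $\int_M v\,dg_0=1$, chosen so that $\bW(g_0,v,\tau r^2)$ is within $\epsilon$ of the infimum; to make monotonicity applicable we first smooth it slightly so that it is a genuine positive solution, or equivalently replace $v$ by the solution of the conjugate heat equation and absorb the error. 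Set $\tau_t = \tau r^2 - t$, so $\tau_0=\tau r^2$ and $\tau_{-r^2}=(1+\tau)r^2$, and let $v_t = (4\pi\tau_t)^{-n/2}e^{-f_t}$ solve $-\partial_t v_t - \Delta v_t + R v_t = 0$ with $v_0 = v$. By Perelman's monotonicity \eqref{W monotone} (valid here because the flow has bounded curvature on compact time intervals, so integration by parts at infinity is justified),
\[
\bW\big(g_{-r^2}, v_{-r^2}, (1+\tau)r^2\big) \le \bW\big(g_0, v_0, \tau r^2\big).
\]
Thus it suffices to show that $v_{-r^2}$ is \emph{almost} an admissible test function for $\mu(B_{-r^2}(z_{-r^2},3Ar),g_{-r^2},(1+\tau)r^2)$, up to the stated error $\tfrac{100}{A^2}e^{-A^2/20}$.

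The heart of the matter is therefore a mass-concentration estimate: I must show that $v_{-r^2}$ has all but an exponentially small fraction of its mass inside $B_{-r^2}(z_{-r^2},3Ar)$. Here is where the $H_n$-center $(z_{-r^2},-r^2)$ of $(x_0,0)$ enters. Since $v_0$ is supported in $B_0(x_0,Ar)$, the measure $v_0\,dg_0$ is concentrated near $x_0$; pushing it backward, $v_{-r^2}\,dg_{-r^2}$ should be concentrated near an $H_n$-center of $(x_0,0)$, which is $z_{-r^2}$. Quantitatively, I would bound the $W_1$-distance $\dist_{W_1}^{g_{-r^2}}(v_{-r^2}\,dg_{-r^2},\ \nu_{x_0,0\,|\,-r^2})$ by comparing both to the conjugate heat kernel: for each point $y\in\spt v_0$, the measure $\nu_{y,0\,|\,-r^2}$ is close to $\nu_{x_0,0\,|\,-r^2}$ (because $d_{W_1}^{g_0}(\delta_y,\delta_{x_0})\le 2Ar$ and $W_1$-distance between kernels is monotone, c.f.\ Theorem \ref{Harnack}-type reasoning or Bamler's kernel bounds), and $v_{-r^2}\,dg_{-r^2}$ is the $v_0$-average of these kernels. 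Combined with the defining property $d_{W_1}^{g_{-r^2}}(\delta_{z_{-r^2}},\nu_{x_0,0\,|\,-r^2})\le\sqrt{H_n}\,r$ from \eqref{HNW1}, I get $d_{W_1}^{g_{-r^2}}(v_{-r^2}\,dg_{-r^2},\delta_{z_{-r^2}})\le C\sqrt{H_n}\,r + 2Ar$, say $\le 3Ar/2$ once $A>10\sqrt{H_n}$. A Markov/Chebyshev-type inequality then converts this $W_1$ bound into the statement that $\int_{M\setminus B_{-r^2}(z_{-r^2},3Ar)} v_{-r^2}\,dg_{-r^2}$ is small — but to get the sharp $e^{-A^2/20}$ decay rather than a polynomial bound I expect to need a genuine Gaussian-type tail estimate on the conjugate heat kernel (Bamler's concentration bounds, \cite[\S3]{Bam20a}), giving $\nu_{x_0,0\,|\,-r^2}(M\setminus B_{-r^2}(z_{-r^2},\rho r)) \le C e^{-\rho^2/C}$ for $\rho$ large, and hence the same for $v_{-r^2}$ after adjusting constants.

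The final step is a cutoff argument: multiply $v_{-r^2}$ by a smooth spatial cutoff $\phi$ equal to $1$ on $B_{-r^2}(z_{-r^2},2Ar)$ and supported in $B_{-r^2}(z_{-r^2},3Ar)$, with $|\nabla\phi|\le C/(Ar)$, renormalize $w := \phi^2 v_{-r^2} / \int \phi^2 v_{-r^2}$, and estimate $\bW(g_{-r^2},w,(1+\tau)r^2) - \bW(g_{-r^2},v_{-r^2},(1+\tau)r^2)$. The renormalization constant is $1 - O(e^{-A^2/C})$ by the tail bound, contributing an $O(e^{-A^2/C})$ error to the $-\log u$ and entropy terms; the gradient term picks up $\int |\nabla\phi|^2 v_{-r^2}$, which is supported in the annulus $2Ar \le d \le 3Ar$ where $v_{-r^2}$ has mass $\le e^{-A^2/C}$, and is weighted by $\tau_{-r^2}|\nabla\phi|^2 \le (1+\tau)r^2 \cdot C/(A^2r^2) = C(1+\tau)/A^2$ — this is exactly where the $1/A^2$ and the exponential combine to give the $\tfrac{100}{A^2}e^{-A^2/20}$ form (the bookkeeping of constants being the one genuinely fiddly part). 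Chaining the three inequalities — cutoff error, monotonicity, and $\mu \le \bW$ for the admissible $w$ — yields
\[
\mu\big(B_{-r^2}(z_{-r^2},3Ar),g_{-r^2},(1+\tau)r^2\big) \le \bW(g_{-r^2},w,(1+\tau)r^2) \le \bW(g_0,v_0,\tau r^2) + \tfrac{100}{A^2}e^{-A^2/20},
\]
and taking the infimum over $v_0$ gives the claim. The main obstacle, as indicated, is getting the Gaussian (rather than merely polynomial) tail for the conjugate heat kernel around its $H_n$-center with clean enough constants to land the explicit $e^{-A^2/20}$; everything else is Perelman monotonicity plus standard cutoff bookkeeping.
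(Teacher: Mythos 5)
Your outline shares the same skeleton as the paper's proof (evolve backward, show mass concentration near the $H_n$-center, cut off, renormalize), but the crucial ingredient is missing, and without it the cutoff comparison in your step two does not close.

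Your plan decomposes the estimate into three inequalities: integral Perelman monotonicity $\bW(g_{-1},v_{-1},1+\tau)\le\bW(g_0,v_0,\tau)$, a cutoff comparison $\bW(g_{-1},w,1+\tau)\le\bW(g_{-1},v_{-1},1+\tau)+O(e^{-A^2/20})$, and the trivial $\mu\le\bW(g_{-1},w,1+\tau)$. The gap is in the cutoff comparison. The $\bW$-integrand is a \emph{signed} pointwise quantity, and integral monotonicity gives you no pointwise control of it on the region you are cutting away. In particular, writing $w=\eta^2 v_{-1}/\alpha$, the difference $\bW(w)-\bW(v_{-1})$ contains the term $\tau'\int R\,(\alpha^{-1}\eta^2-1)\,v_{-1}\,dg_{-1}$; outside $B'$ the factor $(\alpha^{-1}\eta^2-1)=-1$, and since nothing in the hypotheses bounds $|R|$ on the tail in a flow-independent way, this term cannot be absorbed into $\tfrac{100}{A^2}e^{-A^2/20}$. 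Likewise the gradient term is not merely $\int|\nabla\phi|^2 v_{-1}$ after integrating by parts: the cross term produces $-\int 2\eta^2\Delta v_{-1}/v_{-1}\cdot v_{-1}$, and $\Delta v_{-1}$ has no pointwise bound available from integral monotonicity. You simply cannot compare $\bW(w)$ to $\bW(v_{-1})$ term by term.

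What the paper does instead (and what you need) is Wang's pointwise inequality \cite[Theorem~4.2]{W18}: for the backward evolution $u_t$ starting from the actual minimizer $u_0$ of $\mu=\mu(B_0(x_0,A),g_0,\tau)$, one has pointwise on $M\times[-1,0)$
\[
\Big\{\tau_t\big(-2\tfrac{\Delta u_t}{u_t}+|\nabla\log u_t|^2+R\big)-\log u_t-n-\mu-\tfrac{n}{2}\log(4\pi\tau_t)\Big\}u_t\le 0.
\]
This is a genuine pointwise statement that already packages together the Laplacian, gradient, curvature, and entropy terms. After integrating by parts, $\bW(g_{-1},\tilde u,1+\tau)$ is expressed as $\alpha^{-1}\int\{\cdot\}\eta^2 u_{-1}+\alpha^{-1}\int 4|\nabla\eta|^2 u_{-1}-\int\log(\eta^2/\alpha)\cdot(\eta^2/\alpha)u_{-1}$, and the first integral is bounded by $\mu$ \emph{because the pointwise bracket is $\le\mu u_{-1}$}, not by integral monotonicity. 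The second integral is supported in the small-mass annulus and gives the explicit $\tfrac{100}{A^2}e^{-A^2/20}$, while the third is handled by Jensen's inequality. So the theorem is not ``Perelman monotonicity plus cutoff bookkeeping''; the bookkeeping only closes because the monotonicity is available \emph{pointwise}, which requires working with the exact minimizer and Wang's maximum-principle argument. (Two minor points: the solution $v_t$ to the conjugate heat equation backward from a compactly supported nonnegative initial datum is automatically smooth and positive for $t<0$, so no smoothing step is needed; and the Gaussian tail estimate is indeed the right tool for the concentration claim, but the clean route, as in the paper, is to apply Bamler's \cite[Proposition~3.14]{Bam20a} to each $\nu_{x,0\,|\,-1}$ for $x\in B_0(x_0,A)$ after locating its $H_n$-center inside $B$ by a $W_1$-triangle inequality, then average with Fubini.)
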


The sharpness of the above result can be seen from the fact that, taking $A\to \infty$, it reduces to the classical monotonicity of Perelman's $\mu$-functional \cite{Per02}.

\begin{Corollary}[Monotonicity of Perelman's $\mu$-functional]
Assume that $[-r^2,0]\subset I$. Then, for all $\tau>0$, we have
$$\mu(g_{-r^2},(1+\tau)r^2)\leq\mu(g_0,\tau r^2).$$
\end{Corollary}

B.~Wang's local monotonicity formula can also be derived from Theorem \ref{prop: almost mono}. It is worth noting that, in the corollary below, our error term $\frac{C_n}{A^2}e^{-c_nA^2}$ is better than Wang's, and neither is there any restriction on the positive scale $\tau$.

\begin{Corollary}[Local monotonicity formula of Wang {\cite[Theorem 5.4]{W18}}]\label{BWang}
Assume that $[0,T]\subset I$. Let $A\ge 1000n$ be a large constant and let $x_0\in M$ be a fixed point satisfying 
\begin{align*}
    \Ric_{g_t}(x)\leq 
    A/t
    \quad \text{ for all }\quad t\in (0,T]\ \text{ and }\ x\in B_t\left(x_0,\sqrt{t}\right).
\end{align*}
Then, for any $\tau>0$, we have
\begin{align*}
    \mu\left(g_T,B_T\left(x_0,8A\sqrt{T}\right),\tau\right)-\mu\left(g_0,B_0\left(x_0,20A\sqrt{T}\right),\tau+T\right)\ge -\frac{C_n}{A^2}e^{-c_nA^2},
\end{align*}
where $C_n<\infty$ and $c_n>0$ are both dimensional constants.
\end{Corollary}

Next, we improve our previous result \cite[Theorem 1.8]{CMZ21a}, which says that the Nash entropy depends on the local volume lower bound at an $H_n$-center. Seeing that it  determines not only the Nash entropy (c.f. \cite[Theorem 8.1]{Bam20a}), but also the local $\nu$-functional (c.f. Corollary \ref{Corollary2}), the local volume lower bound is a relatively stronger assumption. We shall weaken the assumption by showing that the Nash entropy depends only on the local $\mu$-functional near an $H_n$-center of the base point.

\begin{Theorem}
\label{thm: Nash depends on nu}
Assume that $[-r^2,0]\subseteq I$. Furthermore, assume that $R_{g_{-r^2}}\geq \Rmin$. Then, for any $x_0\in M$, any $H_n$-center $(z,-r^2)$ of $(x_0,0)$, and any $A\ge 8$, we have
\begin{align}\label{eq:nash depend on mu}
     \mu\left(B_{{-r^2}}\left(z,2A\sqrt{H_n}r\right), g_{-r^2},r^2\right)
    \le \N_{x_0,0}(r^2)
    +  C(n,\Rmin r^2,A),
\end{align}
where
$$C(n,\Rmin r^2,A)=\tfrac{C_n}{A^2}e^{-\frac{A^2}{20}}+8\left(e^{-\frac{A^2}{20}}\cdot (n-2\Rmin r^2)+e^{-\frac{A^2}{40}}\cdot(n-2\Rmin r^2)^{\frac{1}{2}}\right),$$
and $C_n$ is a dimensional constant.
\end{Theorem}

The sharpness of the above theorem can be seen from the fact that, if we take $A\to \infty$, then (\ref{eq:nash depend on mu}) reduces to a consequence of (\ref{Nash greater than Pen})
\begin{align*}
    \N_{x_0,0}(r^2)\geq \W_{x_0,0}(r^2)\geq \mu(g_{-r^2},r^2).
\end{align*}
It can be easily observed that \cite[Theorem 1.8]{CMZ21a} becomes a corollary of Theorem \ref{thm: Nash depends on nu} and Corollary \ref{Corollary2}; this shows that Theorem \ref{thm: Nash depends on nu} is indeed stronger than \cite[Theorem 1.8]{CMZ21a}.   Furthermore, later we shall see that Perelman's pseudolocality theorem \cite[Theorem 10.1]{Per02} is also a corollary of Theorem \ref{prop: almost mono} and Theorem \ref{thm: Nash depends on nu}.

\begin{Corollary}[{\cite[Theorem 1.8]{CMZ21a}}]\label{CMZ21A1.8}
Assume that $[-2r^2,0]\subseteq I$. Let $x_0$ be a point on $M$ and $(z,-r^2)$ be an $H_n$-center of $(x_0,0)$. Furthermore, assume that
$$|B_{-r^2}(z,r)|_{-r^2}\geq \alpha r^n,$$
where $\alpha$ is a positive constant. Then we have 
$$\N_{x_0,0}(r^2)\geq -\beta(n,\alpha),$$
where $\beta$ is a positive constant depending only on $n$ and $\alpha$.
\end{Corollary}

\begin{proof}[{Proof of Corollary \ref{CMZ21A1.8} assuming Theorem \ref{thm: Nash depends on nu}} and Corollary \ref{Corollary2}]
By applying the maximum principle to the evolution equation of $R$, we have
$$r^2 R_{g_{-r^2}}\geq -\frac{n}{2}.$$
By Corollary \ref{Corollary2}, we have 
$$\mu\left(B_{-r^2}\left(z,8\sqrt{H_n}r\right),g_{-r^2},r^2\right)\geq -C(n,\alpha).$$
The conclusion then follows from Theorem \ref{thm: Nash depends on nu}.
\end{proof}

\subsection{Perelman's noncollapsing improving and pseudolocality theorems}

As we have mentioned above, previous local monotonicity theorems can be understood as special examples of ours, in which the local geometric assumptions control the position of an $H_n$-center. Perelman's noncollapsing improving theorem \cite[Theorem 8.2]{Per02} is one of them. Recall that, by a second-variation argument, Perelman discovered that the distance function along Ricci flow is a supersolution to a linear parabolic equation, and subsequently proved that the geometric information in a space-time neighborhood of a point determines the geometric information of a larger space-time neighborhood. Bamler \cite[Theorem 3.5]{Bam20a} has refined the second variation argument, and the consequence is a much better property called $H_n$-concentration which we already mentioned above. As we have seen in Subsection 1.2, it is along the $P^*$-parabolic neighborhoods, not the classical parabolic neighborhoods, that the geometric information is diffused, and the local geometric assumptions near a point is nothing but that which controls the shape of a $P^*$-parabolic neighborhood.

Therefore, it is no surprising that Perelman's noncollapsing improving theorem is a consequence of our local monotonicity formula. In fact, the point emphasized in the above paragraph is inspired by the work of Jian \cite{J21}, 
who provided an improved version of \cite[Theorem 8.2]{Per02} and \cite[Theorem 1.1]{W18}. 
Hereby we also prove a stronger version of Jian's result, which is simply a combination of Jian \cite{J21} and Corollary \ref{Corollary3} above.

\begin{Theorem}[Noncollapsing improving]\label{Theorem2}
Assume that $[-2r^2,0]\subseteq I$. Let $x_0\in M$ be a fixed point. If
\begin{align*}
    \int_{-r^2}^0 \sqrt{|t|}R(x_0,t)\, dt 
    & \le Ar,\\
    |B_{-r^2}(x_0,r))|_{-r^2}& \ge A^{-1}r^n,
\end{align*}
then
\[
\nu(B_0(x_0,A r),g_0, A^2r^2)\ge -C(n,A).
\]
As a consequence, there is a constant $\kappa=\kappa(n,A)>0$, such that for any ball $B:=B_0(y,\rho)$ satisfying $y\in B_0(x_0,Ar)$ and $\rho\in(0,Ar]$, we have
\[
    \sup_B R_{g_0}\le \rho^{-2} \implies |B|_{0} \ge \kappa \rho^n.
\]
\end{Theorem}

Another application of our local monotonicity formula is to simplify the proof of Perelman's pseudolocality theorem \cite[Theorem 10.1]{Per02}.  By Theorem \ref{thm: Nash depends on nu}, if the geometry at an $H_n$-center is regular enough, namely, almost Euclidean, then the Nash entropy is almost equal to zero. Furthermore, Bamler's $\varepsilon$-regularity theorem \cite[Theorem 10.3]{Bam20a} shows that the smallness of Nash entropy implies the regularity at the base point. Combining these two facts, Perelman's pseudolocality theorem follows naturally. Here we present B.~Wang's improved version of Perelman's pseudolocality theorem \cite[Theorem 1.2]{W20}.

\begin{Theorem}[Pseudolocality theorem {\cite{Per02,W20}}]\label{BWang2}
For any $\alpha\in(0,\frac{1}{100n})$, there is a $\delta=\delta(n,\alpha)>0$, such that the following holds. Assume that $[0,T]\subset I$. Let $x_0\in M$ be a point satisfying 
\begin{align}\label{pseudo1}
    \inf_{0<t\leq T}\mu\left(B_0\left(x_0,\delta^{-1}\sqrt{t}\right),g_0,t\right)\geq -\delta^2.
\end{align}
Then, for any $t\in(0,T]$ and any $x\in B_t\left(x_0,\alpha^{-1}\sqrt{t}\right)$, we have
\begin{align}
    |{\Rm_{g_t}}|(x)&\leq \alpha t^{-1}\label{pseudo2}
    \\
    \inf\left\{\rho^{-n}|B_t(x,\rho)|_t:\rho\in\left(0,\alpha^{-1}\sqrt{t}\right)\right\}&\geq (1-\alpha)\omega_n,\label{pseudo3}
    \\
    \operatorname{inj}_{g_t}(x)&\geq \alpha^{-1}\sqrt{t},\label{pseudo4}
\end{align}
where $\omega_n$ is the volume of the $n$-dimensional unit disk in the Euclidean space.
\end{Theorem}

\emph{Remark.} Another pseudolocality theorem of Perelman \cite[Theorem 10.3]{Per02} (c.f. \cite{Lu10}) can also be simplified by our method. One may combine the method in the proof of Theorem \ref{BWang2} with the argument in \cite{Lu10} to obtain a proof slightly simpler than Lu's---at least, like the proof of Theorem \ref{BWang2}, the point picking argument can be avoided, and the division into multiple cases is not necessary. The details are left to the readers.

\subsection{Application to ancient Ricci flows}

Finally, we show that our uniform Sobolev inequality for ancient Ricci flows \cite{CMZ21b} is merely a consequence of our local $\nu$-functional estimate in Theorem \ref{thm: nu ge nash}.

\begin{Theorem}\label{AncientSobolev}
(\cite[Theorem 1.1]{CMZ21b}) Let $I=(-\infty,0]$ and assume that $(M^n,g_t)_{t\in I}$ has uniformly bounded Nash entropy, namely, there is a point $(x_0,t_0)\in M\times (-\infty,0]$ such that
$$\mu_\infty:=\lim_{\tau\to\infty}\N_{x_0,t_0}(\tau)>-\infty.$$
Then we have
$$\inf_{t\leq 0}\nu(g_t)=\mu_\infty>-\infty.$$
\end{Theorem}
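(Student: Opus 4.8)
The plan is to prove the two inequalities $\inf_{t\le 0}\nu(g_t)\ge \mu_\infty$ and $\inf_{t\le 0}\nu(g_t)\le\mu_\infty$ separately. For the lower bound, fix an arbitrary time $t_1\le 0$ and an arbitrary scale $\tau>0$; I want to show $\nu(M,g_{t_1},\tau)\ge \mu_\infty$, and then let $\tau\to\infty$. The key is that, since the flow is ancient, for the disk $B_{t_1}(y,Ar)$ I can choose $r$ as large as I wish and apply Theorem \ref{thm: nu ge nash} with base point at a sufficiently early time. Concretely, given $y\in M$ and parameters, I apply Theorem \ref{thm: nu ge nash} at the point $(y,t_1)$ (shifting time so $t_1$ plays the role of $0$), obtaining
\[
\nu\bigl(B_{t_1}(y,Ar),g_{t_1},\sigma r^2\bigr)\ge \N_{y,t_1}(r^2)-\sqrt{n}A-\tfrac{n}{2}\sigma-\tfrac{n}{2}\log(1+\sigma).
\]
Now I let $A\to\infty$ so that $B_{t_1}(y,Ar)$ exhausts $M$; but this makes the $-\sqrt n A$ term blow up, so instead I must send $r\to\infty$ faster: fix $A=A(r)$ with $A\to\infty$ but $A/r\to 0$ (e.g. $A=\sqrt r$), so that $B_{t_1}(y,Ar)\nearrow M$ while $\sqrt n A=o(r)$, and simultaneously choose $\sigma=\sigma(r)\to 0$ with $\sigma r^2\to\infty$. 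Then the error terms $\sqrt n A$, $\tfrac n2\sigma$, $\tfrac n2\log(1+\sigma)$ all tend to $0$, while $\sigma r^2\to\infty$ forces the left side down to $\nu(M,g_{t_1},\infty)=\nu(g_{t_1})$, and the right side tends to $\lim_{r\to\infty}\N_{y,t_1}(r^2)$. By the pointed Nash-entropy limit being independent of the base point (a consequence of Theorem \ref{Harnack}: the $W_1$-distance term is controlled and the $\tfrac n2\log$ term vanishes as $s\to-\infty$), this limit equals $\mu_\infty$. Hence $\nu(g_{t_1})\ge\mu_\infty$ for every $t_1\le 0$, giving $\inf_{t\le 0}\nu(g_t)\ge\mu_\infty$.

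For the reverse inequality $\inf_{t\le 0}\nu(g_t)\le\mu_\infty$, I use Perelman's monotonicity. Let $u_t=K(x_0,t_0\,|\,\cdot,t)$ be the conjugate heat kernel based at the fixed point $(x_0,t_0)$, and let $\tau_t=t_0-t$. Then $\bW(g_t,u_t,\tau_t)=\W_{x_0,t_0}(\tau_t)$ is nondecreasing in $t$ (equivalently nonincreasing in $\tau$), so for every $t<t_0$,
\[
\nu(g_t)\le \mu(M,g_t,\tau_t)\le \bW(g_t,u_t,\tau_t)=\W_{x_0,t_0}(t_0-t)\le \lim_{\tau\to\infty}\W_{x_0,t_0}(\tau).
\]
It remains to identify $\lim_{\tau\to\infty}\W_{x_0,t_0}(\tau)$ with $\mu_\infty=\lim_{\tau\to\infty}\N_{x_0,t_0}(\tau)$. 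This follows from Bamler's (and \cite{MZ21}'s noncompact extension of) the fact that $\N_{x_0,t_0}(\tau)-\W_{x_0,t_0}(\tau)\to 0$ as $\tau\to\infty$ when the Nash entropy is bounded — indeed the difference equals an integral of the Perelman-type monotonicity quantity against $\tau$, and boundedness of $\N$ forces this to vanish in the limit. Combining, $\nu(g_t)\le\mu_\infty$ for $t<t_0$, hence $\inf_{t\le 0}\nu(g_t)\le\mu_\infty$.

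The main obstacle I anticipate is the first (lower-bound) direction: one must verify that the exhaustion $B_{t_1}(y,A(r)r)\nearrow M$ can be arranged with the growth rates above while keeping the flow defined on $[t_1-r^2,t_1]$ (automatic since $I=(-\infty,0]$) \emph{and} that the replacement of $\nu(M,g_{t_1},\sigma r^2)$ by $\nu(g_{t_1})=\nu(M,g_{t_1},\infty)$ is legitimate in the limit — i.e. that $\sigma(r)r^2\to\infty$ really does drive $\nu(\Omega_r,g_{t_1},\sigma r^2)$ down to $\nu(g_{t_1})$. This requires care because $\nu(\Omega,g,\tau)$ is decreasing both in $\Omega$ and (in the relevant sense) as $\tau$ grows, so the two limits cooperate, but one should still check that no lower-order term is lost. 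A secondary technical point is justifying the base-point independence of $\lim_{\tau\to\infty}\N_{y,t_1}(\tau)$ uniformly enough to pass to the limit; this is exactly where Theorem \ref{Harnack} is invoked, using that $R_{g_{t^*}}$ is bounded below on compact time intervals and that the $\tfrac n2\log\frac{t_2-s}{t^*-s}$ term tends to $0$ as $s\to-\infty$ with $t^*$ fixed.
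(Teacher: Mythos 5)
Your overall strategy is the same as the paper's---apply Theorem \ref{thm: nu ge nash} and take limits for the lower bound, and use Perelman's monotonicity for the upper bound---but there are two errors, one of which is a genuine logical gap.

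The serious issue is in the lower bound: you send $A=A(r)\to\infty$ (e.g.\ $A=\sqrt r$) and then assert that $\sqrt n A\to 0$ because $A/r\to 0$. This is false: $\sqrt n A\to\infty$ whenever $A\to\infty$, regardless of how $A$ compares to $r$; the condition $A=o(r)$ has no bearing on whether $\sqrt n A$ stays bounded. Fortunately the whole manoeuvre is unnecessary, since $B_{t_1}(y,Ar)$ already exhausts $M$ as $r\to\infty$ for \emph{fixed} $A$ (the radius $Ar$ still tends to infinity). The correct order of limits, which the paper uses, is to fix $A=\sigma=\epsilon$, first send $r\to\infty$ so that $B_{t_1}(y,\epsilon r)\nearrow M$ and $\epsilon r^2\to\infty$, giving $\nu(g_{t_1})\ge\mu_\infty-\sqrt n\epsilon-\tfrac n2\epsilon-\tfrac n2\log(1+\epsilon)$, and then send $\epsilon\to 0$.

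In the upper bound you take a genuinely different and more roundabout route than the paper. Your chain $\nu(g_t)\le\W_{x_0,t_0}(t_0-t)\le\lim_{\tau\to\infty}\W_{x_0,t_0}(\tau)$ has a sign error: since $\W_{x_0,t_0}(\tau)$ is nonincreasing in $\tau$, the final inequality goes the wrong way. The intended conclusion $\inf_t\nu(g_t)\le\lim_{\tau\to\infty}\W_{x_0,t_0}(\tau)$ is still correct, but it must be obtained by taking the infimum over $t$ of the first two inequalities rather than by writing the false pointwise bound. You then also need the Ces\`aro-type argument showing $\lim_\tau\W_{x_0,t_0}(\tau)=\lim_\tau\N_{x_0,t_0}(\tau)=\mu_\infty$ (which you gesture at). The paper avoids both of these subtleties by using the elementary pointwise inequality $\N_{x_0,t_0}(\tau)\ge\W_{x_0,t_0}(\tau)\ge\nu(g_{t_0-\tau})$ directly: choose $\tau$ large so that $\N_{x_0,t_0}(\tau)\le\mu_\infty+\epsilon$, and conclude immediately. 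That route is cleaner and sidesteps the need to identify $\lim_\tau\W$.
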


The following corollary is then a standard consequence by applying the computations in \cite{Zhq10,Ye15}. 
\begin{Corollary}(\cite[Corollary 1.2]{CMZ21b}) 
Under the same condition as the above theorem, we have
\begin{enumerate}[(1)]
    \item Logarithmic Sobolev inequality: for any compactly supported locally Lipschitz function $u$ on $(M,g_t)$, where $t\leq 0$, and positive scale $\tau>0$, we have
    \begin{align*}
    \int_M u^2\log u^2dg_t-\log\left(\int_Mu^2dg_t\right)\int_M u^2dg_t+\left(\mu_\infty+n+\frac{n}{2}\log(4\pi\tau)\right)\int_Mu^2dg_t&
    \\
    \leq \tau\int_M(4|\nabla u|^2+Ru^2)dg_t&.
    \end{align*}
    \item Sobolev inequality: for any compactly supported locally Lipschitz function $u$ on $(M,g_t)$, where $t\leq 0$, we have
    \begin{equation*}
    \left(\int_M |u|^{\frac{2n}{n-2}}dg_t\right)^{\frac{n-2}{n}}\leq C(n)e^{-\frac{2\mu_\infty}{n}}\int_M(4|\nabla u|^2+Ru^2)dg_t.
    \end{equation*}
\end{enumerate}
\end{Corollary}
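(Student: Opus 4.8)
The plan is to derive both inequalities directly from Theorem \ref{AncientSobolev} by unwinding the definition of Perelman's $\nu$-functional and then running the standard optimization argument of Zhang \cite{Zhq10} and Ye \cite{Ye15}; no further geometry of the flow enters.

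First I would unpack the bound supplied by Theorem \ref{AncientSobolev}. Since $\nu(g_t)=\nu(M,g_t,\infty)=\inf_{0<s<\infty}\mu(M,g_t,s)$, the conclusion $\inf_{t\le0}\nu(g_t)=\mu_\infty$ implies $\mu(M,g_t,\tau)\ge\mu_\infty$ for every $\tau>0$ and every $t\le0$. Unpacking the definitions of $\mu$ and $\bW$, this says that for every $\tau>0$ and every density $w\ge0$ with $\sqrt w\in C_0^\infty(M)$ and $\int_M w\,dg_t=1$ one has
\[
\int_M\Big(\tau\big(|\nabla\log w|^2+R\big)-\log w\Big)\,w\,dg_t-\tfrac{n}{2}\log(4\pi\tau)-n\ \ge\ \mu_\infty .
\]
Since the flow has bounded curvature on compact time intervals, $R$ is bounded on each time-slice, so every term is finite for compactly supported test functions; and since $\mu(M,g_t,\tau)\ge\mu_\infty>-\infty$, the usual density/truncation argument (the only delicate term being the entropy $\int_M w\log w\,dg_t$, which stays controlled precisely because $\mu$ is finite; cf.\ \cite{W18}) shows that this inequality continues to hold when $\sqrt w$ is merely a compactly supported locally Lipschitz function.

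Next I would obtain the logarithmic Sobolev inequality (1). Given a compactly supported locally Lipschitz $u$ on $(M,g_t)$, apply the previous step to the probability density $w=\frac{u^2}{\int_M u^2\,dg_t}$. Writing $\|u\|^2:=\int_M u^2\,dg_t$, one has $\sqrt w=|u|/\|u\|$, $w\,|\nabla\log w|^2=4\|u\|^{-2}|\nabla u|^2$, and $-w\log w=-\|u\|^{-2}u^2\big(\log u^2-\log\|u\|^2\big)$; substituting these into the displayed inequality and multiplying through by $\|u\|^2$ gives, after rearrangement, exactly (1), for every $\tau>0$.

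Finally I would pass from (1) to the Sobolev inequality (2). Normalizing $\int_M u^2\,dg_t=1$ and setting $B:=\int_M\big(4|\nabla u|^2+Ru^2\big)\,dg_t$, inequality (1) reads $\int_M u^2\log u^2\,dg_t\le\tau B-\tfrac{n}{2}\log(4\pi\tau)-n-\mu_\infty$ for all $\tau>0$. If $B\le0$ the right-hand side would tend to $-\infty$ as $\tau\to\infty$, which is impossible for $u\not\equiv0$; hence $B>0$, and minimizing the right-hand side over $\tau$ (the minimum occurring at $\tau=n/(2B)$) yields the scale-invariant ``optimal'' logarithmic Sobolev inequality
\[
\int_M u^2\log u^2\,dg_t\ \le\ \tfrac{n}{2}\log B-\tfrac{n}{2}\log(2\pi n)-\tfrac{n}{2}-\mu_\infty .
\]
An optimal $n$-dimensional logarithmic Sobolev inequality of this shape is equivalent to the $n$-dimensional Sobolev inequality; running the standard self-improvement argument, carried out in the Ricci-flow setting in \cite{Zhq10,Ye15}, upgrades the last display to
\[
\Big(\int_M|u|^{\frac{2n}{n-2}}\,dg_t\Big)^{\frac{n-2}{n}}\ \le\ C(n)\,e^{-2\mu_\infty/n}\int_M\big(4|\nabla u|^2+Ru^2\big)\,dg_t ,
\]
which is (2) (the normalization disappears because this inequality is homogeneous of degree two in $u$). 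The only genuinely nonroutine points are the density step that upgrades the test class from $C_0^\infty$ to compactly supported Lipschitz functions, and the cited equivalence between the optimal logarithmic Sobolev inequality and the Sobolev inequality; everything else is the substitution of the previous paragraph and a one-variable minimization.
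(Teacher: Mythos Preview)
Your proposal is correct and is precisely the argument the paper has in mind: the paper does not spell out a proof but simply states that the corollary ``is then a standard consequence by applying the computations in \cite{Zhq10,Ye15}'', and what you wrote is exactly that standard computation---unwinding $\nu(g_t)\ge\mu_\infty$ into a lower bound for $\mu(M,g_t,\tau)$, substituting $w=u^2/\|u\|_2^2$ to obtain (1), and then optimizing in $\tau$ and invoking the log-Sobolev-to-Sobolev equivalence of \cite{Zhq10,Ye15} for (2).
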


Recall that in \cite{CMZ21b}, we have applied Bamler's tangent flow at infinity as an ``initial manifold'' to apply Q.~Zhang's argument \cite{Zhq10}. However, this requires that Bamler's theory of singular spaces \cite{Bam20c} can be extended to noncompact manifolds. This, though almost certainly true, is not yet fully technically verified 
in
every detail. Indeed, Bamler \cite{Bam21} has sketched how his theory can be generalized to noncompact manifolds. The proof of Theorem \ref{AncientSobolev} which we shall present in this article does not rely on tangent flow at infinity, and hence is much more technically transparent and reader-friendly, for it is nothing but taking $r$ to infinity in formula (\ref{localnu}).

To help the reader navigate our paper, the titles of Sections 3---7 are made identical to the titles of the subsections of Section 1. For a result presented in a subsection of Section 1, one may expect its proof to appear in the section bearing the same title.

\bigskip

\textbf{Acknowledgement.} The third-named author would like to thank Professor Jiaping Wang for many helpful discussions, and for suggesting this problem. The authors would like to thank Professor Richard Bamler for suggesting a statement as presented in Theorem \ref{thm: Nash depends on nu} after the first draft of this paper. The authors would like to thank Professor Richard Bamler and Professor Bennett Chow for their constant and helpful communicating of ideas.

\section{Preliminaries}

\subsection{Perelman's entropy and Nash entropy}
 Let $(M^n,g)$ be a Riemannian manifold, $\tau>0$, and $u$ be a 
 nonnegative function with unit integral 
 such that $\sqrt{u}$ is smooth, 
 then we define the $\bW$ functional as 
\[
    \bW(g,u,\tau)
    := \int_M \left(\tau\left( |\nabla\log u|^2 + R\right) - \log u\right)u\, dg - \frac{n}{2}\log(4\pi\tau)-n.
\]
In fact, writing $u:=(4\pi\tau)^{-\frac{n}{2}}e^{-f}$, it is easy to observe that $\bW$ is the same as Perelman's $\W$-functional
\[
\W(g,f,\tau):= \int_M\left(\tau\left(|\nabla f|^2+R\right)+f-n\right)(4\pi\tau)^{-\frac{n}{2}}e^{-f}dg.
\]
For any region $\Omega\subseteq M,$ following \cite{W18}, we define
\begin{align*}
     \mu(\Omega,g,\tau)
    &:= \inf \left\{\bW(g,u,\tau):
    u\ge 0,\sqrt{u}\in C_0^\infty(\Omega),
    \int_M u\, dg=1
    \right\},\\
    \nu(\Omega,g,\tau)
    &:= \inf_{0<s\le\tau} \mu(\Omega,g,s).
\end{align*}
Indeed, $\nu(\Omega,g,\tau)$ is a local Sobolev constant of the region $\Omega$. If $\Omega= M$ and $\tau=\infty$, then we shall denote by $\nu(g):=\nu(M,g,\infty)$ the $\nu$-functional of Perelman \cite{Per02}.

 By Perelman's monotonicity formula, if $u:=(4\pi\tau_t)^{-\frac{n}{2}}e^{-f}$ is a positive solution to the conjugate heat equation $$\Box^*_tu:=-\partial_tu-\Delta_{g_t} u+R_{g_t}u=0$$ with unit integral, where $\frac{d}{dt}\tau_t=-1$, then $\bW(g_t,u(\cdot,t),\tau_t)$ is, in general, non-decreasing in $t$. More precisely, assuming the validity of integration by parts at infinity, we have
\begin{equation}\label{W monotone}
    \frac{d}{dt}\bW(g_t,u(\cdot,t),\tau_t)=\int_M 2\tau_t\left|\Ric_{g_t}+\nabla^2f-\tfrac{1}{2\tau_t}g_t\right|^2u_tdg_t\geq 0.
\end{equation}
Here the conjugate heat operator $\Box^*_t$ is the conjugate of the heat operator
$$\Box_t:=\partial_t-\Delta_{g_t},$$
since, provided that integration by parts is valid, we have
$$\frac{d}{dt}\int_M uv\,dg_t=\int_M(v\Box_tu-u\Box^*_tv)\,dg_t.$$

Next, we introduce the definition of Perelman's entropy and the Nash entropy. Let $(x_0,t_0)\in M\times I$ be a fixed point in space-time. Then, letting $u_t=(4\pi(t_0-t))^{-\frac{n}{2}}e^{-f_t}:=K(x_0,t_0\,|\,\cdot,t)$ be the conjugate heat kernel based at $(x_0,t_0)$, Perelman's entropy and the Nash entropy based at $(x_0,t_0)$ are respectively defined as
\begin{align}
    \W_{x_0,t_0}(\tau)&=\bW(g_{t_0-\tau},u_{t_0-\tau},\tau),
    \\
    \N_{x_0,t_0}(\tau)&=\int_M f_{t_0-\tau}\,d\nu_{x_0,t_0\,|\,t_0-\tau}-\frac{n}{2},\label{def_nash}
\end{align}
where $\tau>0$ and $t_0-\tau\in I$, and the evolving probability measure 
$$\nu_{x_0,t_0\,|\,t}(\Omega):=\int_\Omega u_t\,dg_t,\quad \Omega\subseteq M\text{ is measurable, and } t\in(-\infty,t_0]\cap I,$$
is also called the \emph{conjugate heat kernel} based at $(x_0,t_0)$ when there is no ambiguity. It is well-known that both $\W_{x_0,t_0}(\tau)$ and $\N_{x_0,t_0}(\tau)$ are monotonically decreasing in $\tau$, and that
\begin{align}\label{Nash greater than Pen}
    0\geq \N_{x_0,t_0}(\tau)\geq \W_{x_0,t_0}(\tau)\qquad\text{ for all } \tau>0 \quad\text{ and } \quad t_0-\tau\in I.
\end{align}

\subsection{Bamler's Harnack inequality for the Nash entropy}

Bamler's Harnack inequality \cite[Corollary 5.11]{Bam20a} enables us to compare the Nash entropy based not only at different points on the same time-slice, but also at points on different time-slices.  Let us recall this result. Note that the second and the third authors \cite[Corollary 4.5]{MZ21} have already generalized this theorem to the noncompact setting, assuming only bounded curvature on each compact time interval.

\begin{Theorem}(\cite[Corollary 5.11]{Bam20a})\label{Harnack}
If $R_{g_{t^*}}\geq R_{\operatorname{min}}$, $s<t^*\leq \min\{t_1,t_2\}$, and $s$, $t^*$, $t_1$, $t_2\in I$, then for any $x_1$, $x_2\in M$, we have
\begin{equation*}
    \N_{x_1,t_1}(t_1-s)-\N_{x_2,t_2}(t_2-s)\leq\left(\frac{n}{2(t^*-s)}-R_{\operatorname{min}}\right)^{\frac{1}{2}}\dist^{g_{t^*}}_{W_1}(\nu_{x_1,t_1\,|\,t^*},\nu_{x_2,t_2\,|\,t^*})+\frac{n}{2}\log\left(\frac{t_2-s}{t^*-s}\right).
\end{equation*}
\end{Theorem}

In the statement of the above theorem, $\dist_{W_1}$ is the $W_1$-Wasserstein distance between two probability measures on a metric space. Recall that the $P^*$-parabolic neighborhood of Bamler \cite[Definition 9.2]{Bam20a} is defined in terms of the $W_1$-Wasserstein distance. Let $(x_0,t_0)\in M\times I$. Then, for all $A>0$ and $T^-$, $T^+\geq0$,  $P^*(x_0,t_0\,|\,A,-T^-,T^+)$ is the set of all points $(x,t)\in M\times (I\cap [t_0-T^-,t_0+T^+])$, satisfying
$$\dist_{W_1}^{g_{t_0-T^-}}(\nu_{x_0,t_0\,|\, t_0-T^-},\nu_{x,t\,|\,t_0-T^-})<A.$$
Therefore, Theorem \ref{Harnack} can be stated in the way that the difference of the Nash entropy is determined by the ``$P^*$-distance'' between the base points.

The notion of $H_n$-center, where $H_n:=\frac{(n-1)\pi^2}{2}+4$, is exactly that which characterizes ``shortest $P^*$-distance'' (\cite[Definition 3.10]{Bam20a}). Let us recall that $(z,s)\in M\times I$ is an $H_n$-center of $(x,t)\in M\times I$, where $s<t$, if
$$\operatorname{Var}(\delta_z,\nu_{x,t\,|\,s})\leq H_n(t-s),$$
where $\operatorname{Var}$ is the variance of two probability measures (\cite[Definition 3.1]{Bam20a}). By the definitions of variance and $W_1$-Wasserstein distance, we also have
\begin{equation}\label{HNW1}
    \dist_{W_1}^{g_s}(\delta_z,\nu_{x,t\,|\,s})\leq\sqrt{H_n(t-s)}.
\end{equation}
Therefore, a path of $H_n$-centers, heuristically speaking, can be understood as a ``$P^*$-geodesic''. Given an arbitrary point $(x,t)\in M\times I$ and an arbitrary time $s\in I\cap(-\infty,t)$, there always exists a point $z$, such that $(z,s)$ is an $H_n$-center of $(x,t)$.

\subsection{Reduced distance as a ``sub''-conjugate heat kernel}

Perelman's reduced distance, though a very interesting and important object of study in its own right, is also a central tool in the analysis of the conjugate heat equation. Let $(x_0,t_0)\in M\times I$ be a fixed point in space-time, then the reduced distance function based at $(x_0,t_0)$ is defined to be
$$\ell_{x_0,t_0}(x,\tau):=\frac{1}{2\sqrt{\tau}}\inf_\gamma\int_0^\tau\sqrt{\eta}\left(R_{g_{t_0-\eta}}(\gamma(\eta))+|\gamma'(\eta)|^2_{g_{t_0-\eta}}\right)d\eta,$$
where $x\in M$, $\tau>0$, $t_0-\tau\in I$, and the infimum is taken over all piecewise smooth curves $\gamma:[0,\tau]\to M$ satisfying $\gamma(0)=x_0$ and $\gamma(\tau)=x$.

The following Theorem is a restatement of \cite[Corollary 9.5]{Per02}.

\begin{Theorem}\label{subsolution}
For any $(x,t)$ and $(x_0,t_0)\in M\times I$ with $t<t_0$, we have
\begin{align*}
    (4\pi(t_0-t))^{-\frac{n}{2}}e^{\ell_{x_0,t_0}(x,t)}\le K(x_0,t_0\,|\,x,t),
\end{align*}
where $K$ is the fundamental solution to the conjugate heat equation.
\end{Theorem}

\section{Local Sobolev inequalities on Ricci flow}

In this section, we present the proof of Theorem \ref{thm: nu ge nash}, Corollary \ref{Qzhang}, and Corollary \ref{Corollary2}, where the last two results follow immediately from the first. Our main technique is Bamler's Harnack inequality of the Nash entropy \cite[Corollary 5.11]{Bam20a} (see Theorem \ref{Harnack} above), which shows how the Nash entropy is dependent on the base point. Given a bound of the Nash entropy based at a fixed point $(x_0,t_0)$ in space-time, we may, by applying \cite[Corollary 5.11]{Bam20a}, show that the Nash entropy based everywhere nearby is similarly bounded. Since the Nash entropy can also be regarded as a concave functional of the conjugate heat kernel, we may show, by applying Jensen's inequality, that any test function whose support is in a disk center at $(x_0,t_0)$ has bounded ``Nash functional'', which also implies a bound of the local $\nu$-functional up to a certain scale.

 By parabolic rescaling, we assume that $r=1$, and we let $\tau>0,A<\infty$ be arbitrarily fixed constants. We shall pick an arbitrary test function and  verify that its $\bW$ functional is bounded from below by the Nash entropy in the way as stated in the theorem. To this end, let $u_0$ be an arbitrary nonnegative test function such that
$\sqrt{u_0}\in C_0^\infty(B_0(x_0,A))$ and $\int_M u_0 \, dg_0=1.$ 
We solve the backward conjugate heat equation
\[
    \Box_t^* u_t:=(- \partial_t - \Delta_{g_t} + R_{g_t}) u_t = 0\quad \text{ on } \quad M\times [-1,0],
\]
with the initial data being $u_0$ at time $t=0.$ Then for all $t\in[-1,0]$, $u_t$ can be written as
\begin{equation}\label{the CHF}
    u_t(x)=\int_M K(y,0\,|\,x,t)u_0(y)dg_0(y),
\end{equation}
where $K$ is the fundamental solution to the conjugate heat equation. Then, the evolving probability measure 
\[
    \mu_t(\Omega):=\int_{\Omega}u_tdg_t,\quad \Omega\subseteq M\text{ is measurable and }t\in[-1,0]
\]
is what Bamler \cite{Bam20b} calls a \emph{conjugate heat flow}.
For $t\in [-1,0)$, we define 
\begin{align*}
    \tau_t:= \tau & - t,\quad
    u_t =: (4\pi \tau_t)^{-n/2} e^{-f_t},\\
    \bN(t) &:= \int_M f_t \, d\mu_t - \frac{n}{2},\\
    \bW(t) &:= \int_M \left(\tau_t(|\nabla f_t|^2+R_{g_t}) + f_t - n\right)d\mu_t.
\end{align*}

\begin{Lemma}
\label{lem: W conv}
\begin{align}
   \bW(0):=\lim_{t\to 0^-} \bW(t) &= \bW(g_0,u_0,\tau), \label{eq: lim of W}
   \\
   -\frac{d}{dt}\left(\tau_t \bN(t)\right)&= \bW(t),\label{eq: der of Nash}
   \\
   \frac{d}{dt} \bW(t) &\ge 0.\label{eq: der of W}
\end{align}
\end{Lemma}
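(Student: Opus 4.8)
The plan is to prove the two differential identities \eqref{eq: der of Nash} and \eqref{eq: der of W} first --- these are Perelman's formulas and the only point is to justify integration by parts over the noncompact $M$ --- and then to deduce \eqref{eq: lim of W} as the continuity of Perelman's $\bW$-functional along the conjugate heat flow. Throughout I would use that, on the compact interval $[-1,0]$ where the curvature is bounded, the conjugate heat kernel obeys two-sided Gaussian bounds and Hamilton-type gradient estimates (see \cite{Bam20b,MZ21} and the references therein); combined with $\sqrt{u_0}\in C_0^\infty(B_0(x_0,A))$ and the representation \eqref{the CHF}, this gives, for every $t\in[-1,0)$: $u_t>0$ is smooth (strong maximum principle); $u_t$, $\nabla u_t$, $\nabla^2u_t$ decay faster than any polynomial at spatial infinity, uniformly on each $[-1,-\delta]$, while $\sup_M u_t\le\sup_M u_0<\infty$; and $u_t\to u_0$ in $C^\infty_{\mathrm{loc}}(M)$ as $t\to0^-$.

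For \eqref{eq: der of Nash}, I would start from $\partial_t f_t=-\Delta f_t+|\nabla f_t|^2-R_{g_t}+\tfrac n{2\tau_t}$ (which follows from $\Box^*u_t=0$), differentiate $\tau_t\bN(t)=\tau_t(\int_M f_t\,u_t\,dg_t-\tfrac n2)$ using $\partial_t(dg_t)=-R_{g_t}\,dg_t$, integrate by parts, and collect the standard cancellations to arrive at $-\tfrac{d}{dt}(\tau_t\bN(t))=\bW(t)$; identity \eqref{eq: der of W} is just the pointwise-in-$t$ version of \eqref{W monotone}. In both, the boundary integrals over geodesic spheres $\partial B_{g_t}(x_0,\rho)$ must be shown to vanish as $\rho\to\infty$ (and differentiation under the integral must be justified): this works because each integrand, such as $u_t|\nabla f_t|^2$, $f_tu_t$, $\nabla f_t\cdot\nabla u_t$, $u_t\Delta f_t$, is a product of a factor of at most polynomial growth --- $|\nabla f_t|=|\nabla\log u_t|\le C_\delta(1+\dist_{g_t}(x_0,\cdot))$ and $|f_t|\le C_\delta(1+\dist_{g_t}(x_0,\cdot)^2)$ on $[-1,-\delta]$ by the Hamilton-type and Gaussian estimates --- with a factor of faster-than-polynomial decay. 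This is Perelman's computation; cf.\ \cite{Per02,W18}.

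For \eqref{eq: lim of W}, first note $\bW(g_0,u_0,\tau)$ is finite ($|\nabla f_0|^2u_0=4|\nabla\sqrt{u_0}|^2$ and $u_0\log u_0$ are bounded with compact support, $R_{g_0}$ bounded on $\spt u_0$), and by \eqref{eq: der of W} the function $\bW(t)$ is nondecreasing on $[-1,0)$, so $\bW(0):=\lim_{t\to0^-}\bW(t)$ exists in $(-\infty,+\infty]$. Writing
\[
\bW(t)=4\tau_t\!\int_M|\nabla\sqrt{u_t}|^2\,dg_t+\tau_t\!\int_M R_{g_t}u_t\,dg_t-\int_M u_t\log u_t\,dg_t-\tfrac n2\log(4\pi\tau_t)-n,
\]
I would show that the scalar term tends to $-\tfrac n2\log(4\pi\tau)-n$ and that $\int_M R_{g_t}u_t\,dg_t$ and $\int_M u_t\log u_t\,dg_t$ converge to their $u_0$-counterparts by dominated convergence (using $|R_{g_t}|\le C$, $\int_M u_t\,dg_t\equiv1$, $\sup_M u_t\le\sup_M u_0$, and the two-sided Gaussian bounds --- for $u_t\log u_t$, split into $\{u_t\le1\}$, where the lower Gaussian bound controls $|\log u_t|$ by a quadratic, and $\{u_t>1\}$, where $0\le u_t\log u_t\le(\log\sup_M u_0)u_t$); Fatou applied to $\int_M|\nabla\sqrt{u_t}|^2dg_t$ then gives $\bW(0)\ge\bW(g_0,u_0,\tau)$. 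The remaining inequality is equivalent to $\limsup_{t\to0^-}\int_M|\nabla\sqrt{u_t}|^2dg_t\le\int_M|\nabla\sqrt{u_0}|^2dg_0$, i.e.\ that no Dirichlet energy escapes. On $M\setminus B_0(x_0,2A)$ one has $\dist_{g_0}(x,y)\ge\tfrac12\dist_{g_0}(x_0,x)$ for $y\in\spt u_0$, so the Gaussian upper bound gives $u_t(x)\le C|t|^{-n/2}e^{-\dist_{g_0}(x_0,x)^2/(C|t|)}$ and the Hamilton-type estimate gives $|\nabla\sqrt{u_t}|^2\le\tfrac{u_t}{4|t|}\log(\sup_M u_0/u_t)$; since $|t|^{-m}e^{-(2A)^2/(C|t|)}\to0$ as $t\to0^-$ for every $m$, the tail integral $\int_{M\setminus B_0(x_0,2A)}|\nabla\sqrt{u_t}|^2dg_t\to0$. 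On the fixed ball $B_0(x_0,2A)\supset\spt u_0$, the $C^\infty_{\mathrm{loc}}$ convergence gives convergence of $\tfrac14|\nabla u_t|^2/u_t$ on compact subsets of $\{u_0>0\}$, while the contribution of a $\delta$-neighborhood of $\partial\spt u_0$ is controlled uniformly for $t$ near $0$ using that $\sqrt{u_0}$ is smooth --- the classical fact that $\bW$ is continuous along the conjugate heat flow, cf.\ \cite{Per02,Zhq10,W18}. Adding the two regions gives $\bW(0)=\bW(g_0,u_0,\tau)$, which is \eqref{eq: lim of W}.

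I expect the main obstacle to be precisely this last step --- ruling out concentration of Dirichlet energy, both at spatial infinity (handled because the Gaussian profile of $u_t$ has width $\sim\sqrt{|t|}\to0$, as quantified by the heat-kernel bounds on the bounded-curvature flow over $[-1,0]$) and near $\partial\spt u_0$ (handled by the smoothness of $\sqrt{u_0}$, exactly as in the classical continuity proof). The rest reduces to the routine differential calculus of Perelman's functionals, valid over the noncompact $M$ because $u_0$ has compact support and the curvature is bounded on $[-1,0]$.
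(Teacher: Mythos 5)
Your proposal is correct and follows essentially the same route as the paper. The paper itself is quite terse here: it cites the two-sided Gaussian heat-kernel bounds of Chow et al.\ (its inequality \eqref{gaussian}, yielding \eqref{quadratic_nonsense}) plus gradient estimates from \cite{EKNT08}/\cite{CMZ21a} to justify integration by parts at infinity --- exactly your mechanism for \eqref{eq: der of Nash} and \eqref{eq: der of W} --- and then outsources the continuity statement \eqref{eq: lim of W} wholesale to \cite[\S 9]{CMZ21a} and \cite[\S 4]{W18}. Your proof fills in the structure of that outsourced step (Fatou for lower semicontinuity of the Dirichlet energy, Gaussian tail decay plus Hamilton-type gradient bound to kill the contribution outside $B_0(x_0,2A)$, and the boundary-layer estimate near $\partial\,\spt u_0$ relying on $\sqrt{u_0}\in C_0^\infty$), which is a more self-contained rendering of the same argument rather than a genuinely different one; the boundary-layer part is the one place you lean on the cited classical continuity fact much as the paper does.
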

\begin{proof}
The proof of the lemma, especially of (\ref{eq: lim of W}), is not essentially different from \cite[\S 4]{W18}, and a detailed treatment can be found in \cite[\S 9]{CMZ21a}. Indeed, by \cite[Theorem 26.25 and Theorem 26.31]{RFV3}, there is a constant $C$ depending only on the local geometry bound in $B_0(x_0,2A)\times [-1,0]$, such that
\begin{eqnarray}\label{gaussian}
\frac{1}{C(t-s)^{\frac{n}{2}}}\exp\left(-\frac{C\dist_t^2(x,y)}{(t-s)}\right)\leq K(x,t\,|\,y,s)\leq \frac{C}{(t-s)^{\frac{n}{2}}}\exp\left(-\frac{\dist_t^2(x,y)}{C(t-s)}\right),
\end{eqnarray}
whenever $-1\leq s<t\leq 0$ and either $x$ or $y$ is in $B_0(x_0,A)$. As a consequence, we have \begin{align}\label{quadratic_nonsense}
u(x,t)\leq \frac{C}{(-t)^{\frac{n}{2}}}\exp\left(-\frac{\dist^2_{0}(x,B_0(x_0,A))}{C(-t)}\right),\quad\text{for all $(x,t)\in M\times[-1,0)$.}
\end{align}
One may then follow the argument in \cite[\S 9]{CMZ21a} line by line to obtain (\ref{eq: lim of W}).

For (\ref{eq: der of W}), one needs only to observe that the bounds in (\ref{quadratic_nonsense}) together with some standard gradient estimates (c.f. \cite[Theorem 10]{EKNT08} or \cite[Lemma 9.11]{CMZ21a}) enable integration by parts at infinity. Hence, (\ref{eq: der of W}) is a consequence of Perelman's monotonicity formula (\ref{W monotone}).

Finally, given the validity of integration by parts at infinity, (\ref{eq: der of Nash}) follows from standard computation; one may refer to \cite{CMZ21b} for more details.
\end{proof}

\begin{proof}[\textbf{Proof of Theorem \ref{thm: nu ge nash}}]
Our goal is to estimate the lower bound of $\bW(0)$. 
By \eqref{eq: lim of W}---\eqref{eq: der of W}, we compute
\begin{align*}
    &\  
        \tau_{-1} \bN(-1) - \tau_0 \bN(0)\\
    =&\  -\int_{-1}^0 \frac{d}{dt}\left(\tau_t \bN(t)\right)\, dt
    =  \int_{-1}^0 \bW(t)\, dt\\
    \le &\ \bW(0) = \bW(g_0,u_0,\tau),
\end{align*}
where we have defined 
$$\bN(0) := \lim_{t\to 0^-}\bN(t).$$
Therefore, it remains to estimate $\bN(-1)$ and $\bN(0)$.

First, we estimate $\bN(0)$. By the maximum principle, for $s\in (-1,0],$ we have
\begin{equation}\label{Rlowerbound}
        R_{g_s} \ge -\frac{n}{2(s+1)}\quad \text{ on }\quad  M.
\end{equation}
For each $s\in (-1,0)$ close to $0,$ we have
\begin{align*}
    \bN(s) &= \bW(s) + \frac{n}{2} - \int \tau_s\left( |\nabla f_s|^2 + R_{g_s}\right)\, d\mu_s\\
    & \le \bW(s)  + \frac{n}{2} + \frac{n\tau_s}{2 (s+1)}.
\end{align*}
By taking $s\to 0^-,$ we have
\[
    \bN(0) \le \bW(0) + \frac{n}{2}(1+\tau).
\]

Then
\begin{align*}
    \bW(0) & \ge 
        \tau_{-1} \bN(-1) - \tau_0 \N(0)\\
    & \ge 
 (1+\tau)\bN(-1) - \tau \bW(0) - \frac{n}{2}\tau(1+\tau),
\end{align*}
and hence 
\begin{equation}\label{something1}
    \bW(0)\ge \bN(-1) - \frac{n}{2}\tau.
\end{equation}
It remains to estimate $\bN(-1).$ To this end, we recall the definition of $\bN$:
\begin{align}\label{bound of N-1}
    \bN(-1)
    &= \int f_{-1} u_{-1} \, dg_{-1} - \frac{n}{2}\\\nonumber
    &= - \int u_{-1} \log u_{-1}\, dg_{-1}
    - \frac{n}{2} - \frac{n}{2} \log(4\pi (1+\tau)).
\end{align}
By (\ref{the CHF}), we have
\[
    u_{-1}(x) = \int K(y,0\,|\,x,-1)u_0(y)\, dg_0(y)=\int K(y,0\,|\,x,-1)\, d\mu_0(y),
\]
where $\mu_0$ is a probability measure. 
Hence, by Jensen's inequality, we have
\[
    u_{-1}\log u_{-1}(x)
    \le \int K(y,0\,|\,x,-1)
    \log K(y,0\,|\,x,-1) \, d\mu_0(y).
\]
It follows from integrating both sides with the measure $dg_{-1}$ that
\begin{align}\label{nonsense}
    &\ \int u_{-1}\log u_{-1}(x)\, dg_{-1}(x)\\\nonumber
    \le&\  \int \int K(y,0\,|\,x,-1)
    \log K(y,0\,|\,x,-1) \, d\mu_0(y) dg_{-1}(x)\\\nonumber
    =&\  \int \int K(y,0\,|\,x,-1)
    \log K(y,0\,|\,x,-1) \,  dg_{-1}(x)d\mu_0(y)\\\nonumber
    =&\  -\int \N_{y,0}(1) \, d\mu_0(y)- \frac{n}{2} - \frac{n}{2}\log(4\pi).
\end{align}
Here, it is easy to verify that 
the change of order of the integration is valid, since $\mu_0$ is supported in $B_0(x_0,A)$ and since, by (\ref{gaussian}), $K(y,0\,|\,\cdot,-1)$ has rapid decay at infinity.


Combining (\ref{bound of N-1}) and (\ref{nonsense}), we have
\begin{equation}\label{something2}
     \bN(-1)
    \ge \int \N_{y,0}(1) \, d\mu_0(y) - \frac{n}{2} \log (1+\tau).
\end{equation}
We shall estimate the integral on the right-hand-side. Let us fix an arbitrary $y\in {\rm spt}(\mu_0)\subseteq B_0(x_0,A)$. By the definition of $W_1$-distance, we have
\[
    \dist_{W_1}^{g_{0}}(\nu_{y,0\,|\,0}, \nu_{x_0,0\,|\,0})
    =\dist_{W_1}^{g_0}(\delta_y,\delta_{x_0})= \dist_{g_0}(x_0,y) < A.
\]
We may then apply Theorem \ref{Harnack} with $x_0\to x_1$, $y\to x_2$, $0\to t_1=t_2=t^*$, $-1\to s$, and obtain
\begin{align}\label{something3}
    \N_{x_0,0}(1)
    &\le \N_{y,0}(1) + \left(\tfrac{n}{2} - \inf R_{g_0}\right)^{\frac{1}{2}} \dist_{W_1}^{g_0}(\nu_{y,0\,|\,0}, \nu_{x_0,0\,|\,0})
    + \frac{n}{2}\log \frac{1}{1}\\\nonumber
    &\le  \N_{y,0}(1) + \sqrt{n}A,
\end{align}
where we have also applied (\ref{Rlowerbound}).
It follows from (\ref{something1}), (\ref{something2}), and (\ref{something3}) that
\begin{align*}
    \bW(0) &\ge \bN(-1) - \frac{n}{2}\tau\\
    &\ge \int \N_{y,0}(1)\, d\mu_0(y)  - \frac{n}{2}\tau - \frac{n}{2}\log(1+\tau)\\
    &\ge \N_{x_0,0}(1) - \sqrt{n}A - \frac{n}{2}\tau - \frac{n}{2}\log(1+\tau).
\end{align*}
Since the test function $u_0$ is arbitrarily fixed, we have
\begin{equation*}
    \mu\left(B_0(x_0,A), g_0, \tau \right)\ge \N_{x_0,0}(1) - \sqrt{n}A - \frac{n}{2}\tau - \frac{n}{2}\log(1+\tau).
\end{equation*}
To see that the local $\nu$-functional is also bounded, one needs only to observe that, for any $s\in(0,\tau]$, we have
\begin{align*}
     \mu\left(B_0(x_0,A), g_0, s \right)&\ge \N_{x_0,0}(1) - \sqrt{n}A - \frac{n}{2}s - \frac{n}{2}\log(1+s)\\
     &\geq \N_{x_0,0}(1) - \sqrt{n}A - \frac{n}{2}\tau - \frac{n}{2}\log(1+\tau).
\end{align*}
This finishes the proof of the theorem. 
\end{proof}

\begin{proof}[\textbf{Proof of Corollary \ref{Qzhang}}]
We still let $r=1$. The idea of the proof is to use the local geometric conditions (\ref{localgeometrybounds}) to estimate the distance between $(x_0,1/2)$ and an $H_n$-center of $(x_0,1)$. First of all, applying (\ref{localgeometrybounds}) and the standard distortion estimates of distance and volume, we may easily obtain
\begin{gather}\label{localgeometrybounds1}
    |B_{1/2}(x_0,c_n)|_{1/2}\ge c(n,A),
    \\\nonumber
    |\Ric|\le \frac{1}{n^2}\quad\text{ on }\quad B_{1/2}(x_0,c_n)\times\left[\tfrac{1}{2},1\right],
\end{gather}
where $c_n$ is a positive constant depending only on $n$. We shift the base time from $0$ to $1/2$ for the purpose of applying \cite[Theorem 1.8]{CMZ21a}.

Let us fix any point $x\in B_{1/2}(x_0,c_n)$, and let $\gamma:[0,1/2]\to M$ be a minimizing $g_{1/2}$-geodesic with constant speed connecting $x_0$ and $x$. Then, by (\ref{localgeometrybounds1}), we estimate
$$|\gamma'(s)|^2_{g_{1-s}}\leq e^{\frac{1}{n^2}}|\gamma'(s)|^2_{g_{1/2}}\leq 4c^2_ne^{\frac{1}{n^2}}\quad\text{ for all }\quad s\in\left[0,\tfrac{1}{2}\right],$$
and consequently
$$\ell_{x_0,1}\left(x,\tfrac{1}{2}\right)\le\frac{1}{2}\int_0^{1/2}\sqrt{\eta}\left(R_{g_{1-\eta}}(\gamma(\eta))+|\gamma'(s)|^2_{g_{1-\eta}}\right)d\eta\leq C(n).$$
Hence, Theorem \ref{subsolution} implies that
$$K\left(x_0,1\,\big|\,x,\tfrac{1}{2}\right)\geq (4\pi)^{-\frac{n}{2}}e^{-\ell_{x_0,1}(x,1/2)}\ge c(n)\quad\text{ for all }\quad x\in B_{1/2}(x_0,c_n),$$
and by (\ref{localgeometrybounds1}), we have
$$\nu_{x_0,1\,|\,1/2}\left(B_{1/2}(x_0,c_n)\right)\geq c(n)|B_{1/2}(x_0,c_n)|_{1/2}\geq c_0(n,A).$$

Letting $(z,1/2)$ be an $H_n$-center of $(x_0,1)$, we have, by \cite[Proposition 3.13]{Bam20a},
$$\dist_{1/2}(x_0,z)\leq c_n+\sqrt{\tfrac{1}{2c_0(n,A)}H_n}\leq C(n,A).$$
By (\ref{localgeometrybounds1}) again, we have
$$\left|B_{1/2}\left(z,C(n,A)+c_n\right)\right|_{1/2}\geq \left|B_{1/2}\left(x_0,c_n\right)\right|\ge c(n,A).$$
It then follows from \cite[Theorem 1.8]{CMZ21a} and the remark thereof that
$$\N_{x_0,1}(1/2)\geq -C(n,A).$$
The corollary follows immediately from Theorem \ref{thm: nu ge nash}.

\end{proof}

\begin{proof}[\textbf{Proof of Corollary \ref{Corollary2}}]
We still assume $r=1$ by applying a parabolic scaling.
We shall only prove the bound of the local $\nu$-functional, and the noncollapsing result is a straightforward consequence.
By (\ref{Rlowerbound}) again, $R_{g_s}\ge - n$ for $s\in [-1/2,0].$ Hence, \cite[Theorem 8.1]{Bam20a} implies that
\begin{equation}\label{X}
     \alpha \le |B_{0}(x_0,1)|_0 
    \le C(n) \exp(\N_{x_0,0}(1/2)).
\end{equation}
Applying Theorem \ref{thm: nu ge nash}, we have
\begin{align*}
    \nu(B_{0}(x_0,A),g_0,\tau)
    &\ge \N_{x_0,0}(1/2)
    - 2\sqrt{n}A - 4n\tau\\
    &\ge \log \alpha - 4n(A +\tau) - C(n).
\end{align*}
   This finishes the proof. 
\end{proof}


\section{The ``reaction-diffusion'' property of the Ricci flow}

In this section, we prove Corollary \ref{Corollary3} and Theorem \ref{HeatHarnack}. The proof of Corollary \ref{Corollary3} is no more than an application of Theorem \ref{thm: nu ge nash} and Theorem \ref{Harnack}. In fact, given the assumption in Corollary \ref{Corollary3}, Bamler's Harnack inequality shows that the Nash entropy based at any point in $P^*(x_0,0\,|\,Ar,0,A^2r^2)$ is bounded, and this corollary follows from Theorem \ref{thm: nu ge nash}. The proof of Theorem \ref{HeatHarnack} uses the fact that the conjugate heat kernel concentrates around an $H_n$-center, and that a solution to the heat equation can be viewed as a convolution with a conjugate heat kernel.

\begin{proof}[\textbf{Proof of Corollary \ref{Corollary3}}]
The proof is no more than an application of the definition of $P^*$-parabolic neighborhood. As usual, we assume, by parabolic rescaling, that $r=1$. Let $(y,s)$ be an arbitrary point in $P^*(x_0,0\,|\,A,0,A^2)$, and we shall estimate the Nash entropy based at $(y,s)$. By the definition of $P^*$-parabolic neighborhood, we have
$$\dist_{W_1}^{g_0}(\delta_{x_0},\nu_{y,s\,|\,0})\leq A.$$
Since (\ref{Rlowerbound}) implies 
$$R_{g_0}\geq -\frac{n}{2}\quad \text{ on }\quad M,$$
we may apply Theorem \ref{Harnack} with $(x_0,0)\to (x_1,t_1)$, $(y,s)\to (x_2,t_2)$, $0\to t^*$, $-1\to s$, $-\frac{n}{2}\to R_{\operatorname{min}}$, and this yields
\begin{align*}
    \N_{x_0,0}(1)&\leq \N_{y,s}(1+s)+\left(\frac{n}{2(0-(-1))}-(-\frac{n}{2})\right)^{\frac{1}{2}}\dist^{g_0}_{W_1}(\nu_{y,s\,|\,0},\delta_{x_0})+\frac{n}{2}\log\left(\frac{s-(-1)}{0-(-1)}\right)
    \\
    &\leq \N_{y,s}(1+s)+\sqrt{n}A+\frac{n}{2}\log(1+A^2),
\end{align*}
where we have used the fact that $s\in[0,A^2]$. Therefore, we have
\begin{align*}
    \N_{y,s}(1)&\geq \N_{y,s}(1+s)\geq \N_{x_0,0}(1)-\sqrt{n}A-\frac{n}{2}\log(1+A^2),
\end{align*}
and the conclusion follows from Theorem \ref{thm: nu ge nash}.
\end{proof}

\begin{proof}[\textbf{Proof of Theorem \ref{HeatHarnack}}]
By parabolic scaling, we assume $r=1$. Let us define the function $\tilde H: M\times [-1,0]\to\mathbb R$ as
$$\tilde H(x,t):=\int_{\bar B_{-1}(z,A)}H(\cdot,-1)\,d\nu_{x,t\,|\,-1}\quad\text{ for }\quad (x,t)\in M\times[-1,0].$$
Obviously, since $\tilde H$ solves the heat equation, and since $H(\cdot,-1)\ge\tilde H(\cdot,-1)$ everywhere on $M$, we have that $H\geq \tilde H$ everywhere on $M\times(0,1]$ by the maximum principle. Hence, we need only to estimate $\tilde H$ on $\bar B_0(x_0,\Lambda)$.

By \cite[Proposition 3.14]{Bam20a}, we have
\begin{align*}
    \nu_{x_0,0\,|\,-1}(\bar B_{-1}(z,A))\geq 1-2\exp\left(-\tfrac{1}{8}\left(A-\sqrt{2H_n}\right)^2\right).
\end{align*}
Therefore, regarding 
$$(x,t)\mapsto\nu_{x,t\,|\,-1}(\bar B_{-1}(z,A))$$
as a solution to the heat equation, and applying \cite[Theorem 4.1]{Bam20a}, we have
\begin{align}\label{ultimatenonsense1}
    \nu_{x,0\,|\,-1}(\bar B_{-1}(z,A))&\geq \Phi\left(\Phi^{-1}\left(\nu_{x_0,0\,|\,-1}(\bar B_{-1}(z,A))\right)-\Lambda\right)
    \\\nonumber
    &\geq \Phi\left(\Phi^{-1}\left(1-2\exp\left(-\tfrac{1}{8}\left(A-\sqrt{2H_n}\right)^2\right)\right)-\Lambda\right),
\end{align}
for all $x\in B_0(x_0,\Lambda)$. We will now estimate the right-hand-side. Let
$$\eta=\Phi^{-1}\left(1-2\exp\left(-\tfrac{1}{8}\left(A-\sqrt{2H_n}\right)^2\right)\right).$$
Then, by the definition of $\Phi$, we have
\begin{align*}
    2\exp\left(-\tfrac{1}{8}\left(A-\sqrt{2H_n}\right)^2\right)&=\int_\eta^\infty(4\pi)^{-\frac{1}{2}}e^{-\frac{t^2}{4}}dt
    \\
    &\ge \int_\eta^{\eta+4\sqrt{\pi}}(4\pi)^{-\frac{1}{2}}e^{-\frac{t^2}{4}}dt
    \\
    &\geq 2\exp\left(-\tfrac{1}{4}\left(\eta+4\sqrt{\pi}\right)^2\right),
\end{align*}
and consequently
\begin{align}\label{ultimatenonsense2}
    \eta\geq \frac{1}{\sqrt{2}}A-\sqrt{H_n}-4\sqrt{\pi}.
\end{align}
Combining (\ref{ultimatenonsense1}) and (\ref{ultimatenonsense2}), we have
\begin{align}\label{XX}
    \nu_{x,0\,|\,-1}(\bar B_{-1}(z,A))\geq\Phi\left(\tfrac{1}{\sqrt{2}}A-\Lambda-\sqrt{H_n}-4\sqrt{\pi}\right):=\Phi_0\quad\text{ for all }\quad x\in B_0(x_0,\Lambda).
\end{align}
Finally, letting $x'\in \bar B_{0}(x_0,\Lambda)$ be the point where $\tilde H(\cdot,0)$ attains its minimum on $\bar B_0(x_0,\Lambda)$, we have
\begin{align*}
    \min_{\bar B_0(x_0,\Lambda)} H(\cdot,0)&\geq\min_{\bar B_0(x_0,\Lambda)} \tilde H(\cdot,0) =\tilde H(x',0)
    \\
    &=\int_{\bar B_{-1}(z,A)}H(\cdot,-1)d\nu_{x',0\,|\,-1}
    \\
    &\geq \nu_{x',0\,|\,-1}(\bar B_{-1}(z,A))\cdot\min_{\bar B_{-1}(z,A)} H(\cdot,-1)
    \\
    &\geq \Phi_0\cdot \min_{\bar B_{-1}(z,A)} H(\cdot,-1).
\end{align*}
This finishes the proof of the Theorem.
\end{proof}

\section{Local monotonicity and Nash entropy}

In this section, we prove Theorem \ref{prop: almost mono}, Theorem \ref{thm: Nash depends on nu}, and Corollary \ref{BWang}. The proof of Theorem \ref{prop: almost mono} is inspired by \cite{W18} and \cite{TZ21}. The idea is to evolve the minimizer of the local $\mu$-functional at $t=0$ using conjugate heat flow, and we may then compare the local $\mu$-functional at different time-slices using the differential Harnack inequality in \cite{W18}. The error terms come from the concentration estimate of the heat kernel measure near an $H_n$-center \cite[Proposition 3.14]{Bam20a}. The proof of Theorem \ref{thm: Nash depends on nu} is similar to that of Theorem \ref{prop: almost mono}. Given Theorem \ref{prop: almost mono}, the proof of Corollary \ref{BWang} is reduced to estimating the distance between $(x_0,0)$ and an $H_n$-center of $(x_0,T)$.

\begin{proof}[\textbf{Proof of Theorem \ref{prop: almost mono}}]
By parabolic rescaling, we may assume that $r=1.$
For simplicity, we write $$B=B_{-1}\left(z,\tfrac{7}{4}A\sqrt{H_n}\right),\quad
B'=B_{-1}\left(z,2A\sqrt{H_n}\right),$$
where $(z,-1)$ is an $H_n$-center of $(x_0,0).$
  Let $u_0$ be the minimizer of $\mu:=\mu\left(B_0\left(x_0,A\sqrt{H_n}\right),g_0,\tau\right).$ 
Following \cite{W18}, we let $u_t$ be the solution to the conjugate heat equation $\Box_t^*u_t=0$ on $M\times [-1,0]$ with initial data being $u_0$ at $t=0$.
Defining $\tau_t:=\tau-t$, by \cite[Theorem 4.2]{W18}, we have
\[
    \Big\{
        \tau_t \left( -2\tfrac{\Delta u_t}{u_t} + |\nabla\log u_t|^2+R \right)
        -\log u  - n - \mu - \tfrac{n}{2}\log(4\pi\tau_t)
    \Big\}u_t \le 0\quad\text{ on }\quad M\times [-1,0).
\]
As before, $d\mu_t:=u_t\, dg_t$ is a probability measure for $t\in [-1,0].$
\\

\noindent\textbf{Claim:}
If $A\geq 16,$ then we have
\[
    \mu_{-1}(M\setminus B)
    \le 2e^{-\frac{A^2}{20}}
    <\tfrac{1}{2}.
\]

\begin{proof}[Proof of the Claim.]

For any $x\in B_0\left(x_0,A\sqrt{H_n}\right),$ let $(z_x,-1)$ be an $H_n$-center of $(x,0).$ Then
\begin{align*}
    &\dist_{-1}(z_x,z)
     = \dist_{W_1}^{g_{-1}}(\delta_{z_x},\delta_{z})\\
    \le&\  \dist_{W_1}^{g_{-1}}(\delta_{z_x},\nu_{x,0\,|\,-1})
    +\dist_{W_1}^{g_{-1}}(\nu_{x,0\,|\,-1},\nu_{x_0,0\,|\,-1})
    +\dist_{W_1}^{g_{-1}}(\delta_{z},\nu_{x_0,0\,|\,-1})\\
    \leq&\  2\sqrt{H_n}+\dist^{g_0}_{W_1}(\nu_{x,0\,|\,0},\nu_{x_0,0\,|\,0})
    \\
    \leq&\ 2\sqrt{H_n}+A\sqrt{H_n}\\
    \leq &\ \tfrac{5}{4}A\sqrt{H_n},
\end{align*}
where we used the monotonicity of the $W_1$-Wassernstein distance (c.f. \cite[Lemma 2.7]{Bam20a}). 
Then  
$
    B_{-1}\left(z_x,\frac{1}{2}A\sqrt{H_n}\right)\subseteq B
$
for any $x\in B_0\left(x_0,A\sqrt{H_n}\right).$ By \cite[Proposition 3.14]{Bam20a}, for any $x\in B_0\left(x_0,A\sqrt{H_n}\right)$, if $A\ge 16$, then we have
\[
    \nu_{x,0\,|\,-1}(M\setminus B)
    \le \nu_{x,0\,|\,-1}\left((M\setminus B_{-1}\left(z_x,\tfrac{1}{2}A\sqrt{H_n}\right)\right)
    \le
    2e^{-\frac{A^2}{20}}.
\]
 By the standard semi-group property, we have
\[
    \mu_{-1}(M\setminus B)
    = \int_{B_0\left(x_0,A\sqrt{H_n}\right)} \nu_{x,0\,|\,-1}(M\setminus B)\cdot u_0(x)\, dg_0(x)
    \le 
    2e^{-\frac{A^2}{20}}.
\]
\end{proof}

\def \tu {\tilde{u}}

Let $\eta$ be a smooth cutoff function supported in $B'$ such that $\eta|_B=1$, $0\le \eta\le 1$, and $|\nabla \eta|^2\le \frac{160}{A^2H_n}\eta.$
Define
\[
    \alpha := \int_M \eta^2u_{-1} \, dg_{-1}= \int_M \eta^2\, d\mu_{-1}.
\]
By the Claim, we have $$\tfrac{1}{2}\leq 1-2e^{-\frac{A^2}{20}}\le \alpha \le 1.$$ Let $\tu=\eta^2u_{-1}/\alpha.$ Then $\tu\in C_0^\infty(B')$ and $\int \tu\, dg_{-1}=1.$
In the following, we omit the measure $dg_{-1}$ and write $u=u_{-1}$ when there is no ambiguity.
Integrating by parts, we have
\begin{align*}
    \int |\nabla \log \tu|^2\tu\, &= \alpha^{-1} \int |\nabla \log u|^2\eta^2 u + 2\langle\nabla \eta^2, \nabla u\rangle + 4|\nabla \eta|^2 u\\
    &= \alpha^{-1} \int \big(|\nabla \log u|^2u -2\Delta u\big) \eta^2
    + 4|\nabla\eta|^2u.
\end{align*}
Then
\begin{align}\label{localmuestimate}
    \mu(B',g_{-1},1+\tau)\le 
    &\ \int \left(\tau_{-1}(|\nabla \log \tu|^2+R)\tu - \log\tu \cdot \tu \right) dg_{-1} - n - \frac{n}{2}\log(4\pi \tau_{-1})\\\nonumber
    = &\  \alpha^{-1}\int 
    \Big\{\tau_{-1}\big(|\nabla \log u|^2 -2\tfrac{\Delta u}{u}
    +R \big) - \log u 
    - n - \tfrac{n}{2}\log(4\pi\tau_{-1})\Big\}\eta^2 u\\\nonumber
    &\quad + \alpha^{-1}\int 4 |\nabla\eta|^2u
    - \int \log \tfrac{\eta^2}{\alpha} \cdot \tfrac{\eta^2}{\alpha} u\\\nonumber
    \le &\ \mu + \frac{C_n}{\alpha A^2} \mu_{-1}(M\setminus B)
    \le  \mu + \frac{C_n}{A^2}e^{-\frac{A^2}{20}},
\end{align}
where we have implemented the following consequence of Jensen's inequality applied to the convex function $t\mapsto t\log t$ and the probability measure $u_{-1}\,dg_{-1}$
\[
    - \int \log \tfrac{\eta^2}{\alpha} \cdot \tfrac{\eta^2}{\alpha} u
    \le - \int \tfrac{\eta^2}{\alpha} u\cdot \log \int \tfrac{\eta^2}{\alpha} u =0.
\]
\end{proof}

\begin{proof}[\textbf{Proof of Theorem \ref{thm: Nash depends on nu}}]
The proof of Theorem \ref{thm: Nash depends on nu} is similar to the above proof. We shall again assume $r=1$ by parabolic rescaling. We define
\begin{gather*}
    B=B_{-1}\left(z,A\sqrt{H_n}\right),\quad B'=B_{-1}\left(z,2A\sqrt{H_n}\right),\\ u_t=(-4\pi t)^{-\frac{n}{2}}e^{-f_t}=: K(x_0, 0\,|\,\cdot,t)\quad\text{ for }\quad t\in[-1,0),
\end{gather*} 
where $(z,-1)$ is an $H_n$-center of $(x_0,0)$. As before, let $\eta$ be the cut-off function defined on $(M,g_{-1})$, satisfying $\eta|_B=1$, $\eta|_{M\setminus B'}=0$, $0\leq\eta\leq 1$, and $|\nabla\eta|^2\leq \frac{10}{A^2H_n}\eta$, and let
$$\alpha:=\int_M u_{-1}\eta^2dg_{-1}.$$
We shall then use $$\tilde u:=\alpha^{-1}u_{-1}\eta^2$$ as a test function to estimate $\mu(g_{-1},B',1)$. 

First of all, \cite[Proposition 3.14]{Bam20a} implies that
\begin{align}\label{unitintegralcoefficient}
   \tfrac{1}{2}\leq 1-2e^{-\frac{A^2}{20}}\leq\nu_{x_0,0\,|\,-1}(B) \leq\alpha\leq \nu_{x_0,0\,|\,-1}(M)=1.
\end{align}
Then, we may follow the same computation as in (\ref{localmuestimate}). Since this computation is on the fixed time-slice at $t=-1$, we shall omit the subindex $-1$ and the measure notation $dg_{-1}$ when there is no ambiguity. 
\begin{align}\label{localmuestimate1}
    \mu(B',g_{-1},1)\le 
    &\ \int \left((|\nabla \log \tilde u|^2+R)\tilde u - \log\tilde u \cdot \tilde u \right) dg_{-1} - n - \frac{n}{2}\log(4\pi)\\\nonumber
    = &\  \alpha^{-1}\int 
    \Big\{\big(|\nabla \log u|^2 -2\tfrac{\Delta u}{u}
    +R \big) - \log u 
    - n - \tfrac{n}{2}\log(4\pi)\Big\}\eta^2 u\\\nonumber
    &\quad + \alpha^{-1}\int 4 |\nabla\eta|^2u
    - \int \log \tfrac{\eta^2}{\alpha} \cdot \tfrac{\eta^2}{\alpha} u
    \\\nonumber
    =&\ \alpha^{-1}\int \left(2\Delta f-|\nabla f|^2+R+f-n\right)\eta^2u + \alpha^{-1}\int 4 |\nabla\eta|^2u
    - \int \log \tfrac{\eta^2}{\alpha} \cdot \tfrac{\eta^2}{\alpha} u
    \\\nonumber
    =&\ \alpha^{-1}\int(|\nabla f|^2+R+f-n)\eta^2u -\alpha^{-1}\int 4\langle \nabla f,\nabla\eta\rangle \eta u
        \\\nonumber
     &\quad + \alpha^{-1}\int 4 |\nabla\eta|^2u
    - \int \log \tfrac{\eta^2}{\alpha} \cdot \tfrac{\eta^2}{\alpha} u.
\end{align}
The last two terms are easily dealt with using the same argument as in the proof of the previous theorem, we shall consider the first two terms. 

First of all, we observe that the first term can be split into two terms.
\begin{align}\label{nonsenselocal1}
    &\alpha^{-1}\int(|\nabla f|^2+R+f-n)\eta^2u
    \\\nonumber
    =&\ \alpha^{-1}\int(|\nabla f|^2+R)\eta^2u+\alpha^{-1}\int\left(f-\tfrac{n}{2}\right)\eta^2u -\frac{n}{2}
    \\\nonumber
    =&:\ \text{I}+\text{II} -\frac{n}{2}.
\end{align}
Applying \cite[Proposition 5.13]{Bam20a} (see also \cite[Proposition 3.3]{CMZ21a} for the proof under our current condition), we may estimate the term I as follows.
\begin{align}\label{nonsenselocal2}
    \text{I} & = \alpha^{-1}\int(|\nabla f|^2+R)\eta^2u
    \\\nonumber
    &= \alpha^{-1}\int(|\nabla f|^2+R-\Rmin)\eta^2u+\Rmin
    \\\nonumber
    &\leq \alpha^{-1}\int(|\nabla f|^2+R-\Rmin) u+\Rmin
        \\\nonumber
    &= \alpha^{-1}\int(|\nabla f|^2+R) u -(\alpha^{-1}-1)\Rmin
    \\\nonumber
    &\leq \alpha^{-1}\cdot\frac{n}{2}-(\alpha^{-1}-1)\Rmin.
\end{align}
We may apply \cite[Proposition 5.13]{Bam20a} again as well as the definition of the Nash entropy (\ref{def_nash}) to estimate the term II as follows. For the sake of notational simplicity, we have defined $\N:=\N_{x_0,0}(1)$.
\begin{align}\label{nonsenselocal3}
    \text{II} &=\alpha^{-1}\int\left(f-\tfrac{n}{2}-\N\right)\eta^2u +\N
    \\\nonumber
    &=\alpha^{-1}\int\left(f-\tfrac{n}{2}-\N\right)(\eta^2-1)u +\N
    \\\nonumber
    &\leq \alpha^{-1}\left(\int\left(f-\tfrac{n}{2}-\N\right)^2u\right)^{\frac{1}{2}}\left(\int\left(\eta^2-1\right)^2u\right)^{\frac{1}{2}}+\N
    \\\nonumber
    &\leq \alpha^{-1}(\nu_{-1}(M\setminus B))^{\frac{1}{2}}(n-2\Rmin)^{\frac{1}{2}}+\N
    \\\nonumber
    &\leq \sqrt{2}\alpha^{-1}\cdot(n-2\Rmin)^{\frac{1}{2}}\cdot e^{-\frac{A^2}{40}}+\N,
\end{align}
where we have applied (\ref{unitintegralcoefficient}). Combining (\ref{nonsenselocal1}),  (\ref{nonsenselocal2}), and (\ref{nonsenselocal3}), we have
\begin{align}\label{nonsenselocal4}
    &\alpha^{-1}\int(|\nabla f|^2+R+f-n)\eta^2u
    \\\nonumber
    \leq &\ \N+ \tfrac{1}{2}(\alpha^{-1}-1)\cdot(n-2\Rmin)+\sqrt{2}\alpha^{-1}e^{-\frac{A^2}{40}}\cdot(n-2\Rmin)^{\frac{1}{2}}
    \\\nonumber
    \leq &\ \N+4\left(e^{-\frac{A^2}{20}}\cdot(n-2\Rmin)+e^{-\frac{A^2}{40}}\cdot(n-2\Rmin)^{\frac{1}{2}}\right),
\end{align}
where we have applied (\ref{unitintegralcoefficient}).

Next, we apply the Cauchy-Schwarz inequality to estimate the second term on the right-hand-side of (\ref{localmuestimate1}).
\begin{align}\label{nonsenselocal5}
    -\alpha^{-1}\int 4\langle \nabla f,\nabla\eta\rangle \eta u&\leq 4\alpha^{-1}\left(\int|\nabla f|^2u\right)^{\frac{1}{2}}\left(\int|\nabla \eta|^2\eta^2 u\right)^{\frac{1}{2}}
    \\\nonumber
    &\leq 4\alpha^{-1}\left(\int(|\nabla f|^2+R)u-\Rmin\right)^{\frac{1}{2}}\left(\nu_{-1}(M\setminus B)\right)^{\frac{1}{2}}
    \\\nonumber
    &\leq 4\alpha^{-1}e^{-\frac{A^2}{40}}\cdot(n-2\Rmin)^{\frac{1}{2}}.
\end{align}

Finally, arguing as in the proof of Theorem \ref{prop: almost mono}, we have
\begin{align}\label{nonsenselocal6}
    \alpha^{-1}\int 4 |\nabla\eta|^2u\leq\alpha^{-1}\frac{C_n}{A^2}e^{-\frac{A^2}{20}},\quad - \int \log \tfrac{\eta^2}{\alpha} \cdot \tfrac{\eta^2}{\alpha} u\leq 0.
\end{align}
Combining (\ref{localmuestimate1}), (\ref{nonsenselocal4}), (\ref{nonsenselocal5}), and (\ref{nonsenselocal6}), the conclusion follows.
\end{proof}

\begin{proof}[\textbf{Proof of Corollary \ref{BWang}}]
Let us assume $T=1$. By Theorem \ref{prop: almost mono}, the proof is reduced to estimating the distance between $(x_0,0)$ and an $H_n$-center of $(x_0,1)$. Let $\psi:\mathbb R\to [0,1]$ be the cut-off function applied by \cite{W18}, satisfying $\psi|_{(-\infty,1)}\equiv1$, $\psi_{(2,\infty)}\equiv 0$, and 
\begin{align}\label{BWangcutoff}
    \psi''\ge-10\psi\quad\text{ and }\quad (\psi')^2\leq 10\psi.
\end{align}
Then, arguing as the proof of \cite[Theorem 5.4]{W18} (which is merely an application of \cite[Lemma 8.3(a)]{Per02}), we have that the function
$$\psi(x,t):=\psi\left(\frac{\dist_t(x_0,x)+A\sqrt{t}}{A+1}\right),\quad\text{ where }\quad (x,t)\in M\times[0,1],$$
satisfies the inequality
$$\Box \psi\leq\frac{10}{(A+1)^2}\psi.$$

Therefore, we may compute
$$\frac{d}{dt}\int_M\psi(\cdot,t)d\nu_{x_0,1\,|\,t}=\int_M\Box_t\psi(\cdot,t)d\nu_{x_0,1\,|\,t}\leq\frac{10}{(A+1)^2}\int_M\psi(\cdot,t)d\nu_{x_0,1\,|\,t},$$
and consequently
\begin{align}\label{distnonsense1}
    \nu_{x_0,1\,|\,0}\left(B_0(x_0,3A)\right)\geq \int_M\psi(\cdot,0)d\nu_{x_0,1\,|\,0}\ge e^{-\frac{10}{(A+1)^2}}\int_M\psi(\cdot,1)d\nu_{x_0,1\,|\,1}= e^{-\frac{10}{(A+1)^2}}.
\end{align}
Letting $(z,0)$ be an $H_n$-center of $(x_0,1)$ and $r:=\dist_0(x_0,z)$. Then, by \cite[Proposition 3.14]{Bam20a}, whenever $r>3A$, we have
\begin{align}\label{distnonsense2}
    \nu_{x_0,1\,|\,0}\left(B_0(x_0,3A)\right)&\leq \nu_{x_0,1\,|\,0}\left(M\setminus B_0(z,r-3A)\right)
    \\\nonumber
    &\leq 2\exp\left(-\frac{(r-3A-\sqrt{2H_n})_+^2}{8}\right).
\end{align}
Combining (\ref{distnonsense1}) and (\ref{distnonsense2}), we have
\begin{align*}
    r\leq 3A+\sqrt{2H_n}+\left(\frac{80}{(A+1)^2}+8\log2\right)^{\frac{1}{2}}\leq 4A,
\end{align*}
where we have applied the fact that $A\ge 1000n$.

Finally, applying Theorem \ref{prop: almost mono}, we have
\begin{align*}
    \mu(B_1(x_0,8A),g_1,\tau)&\ge \mu(B_0(z,16A),g_0,1+\tau)-\frac{C_n}{A^2}e^{-c_nA^2}
    \\
    &\ge \mu(B_0(x_0,20A),g_0,1+\tau)-\frac{C_n}{A^2}e^{-c_nA^2}.
\end{align*}
The corollary then follows.
\end{proof}

\section{Perelman's noncollapsing improving and pseudolocality theorems}

\subsection{Noncollapsing improving theorem}

In this subsection, we prove Theorem \ref{Theorem2}. The proof is almost identical to \cite{J21}, but we sketch it for the convenience of the reader. The main idea is that the integral bound on the scalar curvature guarantees that $x_0$ is not far away from any $H_n$-center at time $-r^2$. The theorem follows from the Harnack property of the Nash entropy and Theorem \ref{thm: nu ge nash}.

\begin{proof}[\textbf{Proof of Theorem \ref{Theorem2}}]
As usual, we assume $r=1$ by parabolic rescaling. Let $(z,-1)$ be an $H_n$-center of $(x_0,0)$, then Bamler's conjugate heat kernel estimate (c.f. \cite[Theorem 7.2]{Bam20a}; see \cite[Theorem 3.2]{CMZ21a} for the proof under the bounded curvature assumption) together with Theorem \ref{subsolution} implies that
\begin{eqnarray}\label{nonsense0001}
\frac{1}{(4\pi)^{\frac{n}{2}}}e^{-\ell_{x_0,0}(x_0,1)}\leq K(x_0,0\,|\,x_0,-1)\leq \frac{C}{(4\pi)^{\frac{n}{2}}}\exp\left(-\N_{x_0,0}(1)-\frac{\dist^2_{-1}(x_0,z)}{C}\right),
\end{eqnarray}
where $C$ is a numerical constant depending only on the dimension, and $\ell_{x_0,0}$ is Perelman's reduced distance based at $(x_0,0)$ which can be estimated by
\begin{eqnarray}\label{nonsense0002}
\ell_{x_0,0}(x_0,1)\leq\frac{1}{2\sqrt{1}}\int_0^1\sqrt{\tau}R_{g_{-\tau}}(x_0)d\tau\leq\frac{A}{2},
\end{eqnarray}
where we have used the static curve at $x_0$ as a test curve. Combining (\ref{nonsense0001}) and (\ref{nonsense0002}), we have
\begin{eqnarray}\label{nonsense003}
\dist_{-1}(x_0,z)&\leq& C(n,A)\left(-\N_{x_0,0}(1)+1\right)^{\frac{1}{2}}
\\\nonumber
&\leq& C(n,A)\left(-\N_{x_0,0}(2)+1\right)^{\frac{1}{2}}.
\end{eqnarray}
On the other hand, since $(z,-1)$ is an $H_n$-center of $(x_0,0)$, by (\ref{HNW1}), we have 
\begin{eqnarray}\label{nonsense0004}
\dist_{W_1}^{g_{-1}}(\nu_{x_0,0\,|\,-1},\delta_{x_0})&\leq& \dist_{W_1}^{g_{-1}}(\nu_{x_0,0\,|\,-1},\delta_{z})+\dist_{W_1}^{g_{-1}}(\delta_{x_0},\delta_{z})
\\\nonumber
&\leq& \sqrt{H_n}+\dist_{-1}(x_0,z)
\\\nonumber
&\leq& C(n)+C(n,A)\left(-\N_{x_0,0}(2)+1\right)^{\frac{1}{2}},
\end{eqnarray}
where we have also applied (\ref{nonsense003}). Applying Theorem \ref{Harnack} with $(x_0,-1)\to (x_1,t_1)$, $(x_0,0)\to (x_2,t_2)$, $-1\to t^*$, $-2\to s$, and taking (\ref{nonsense0004}) and the fact that $R_{g_{-1}}\geq -\frac{n}{2}$ (obtained in the same ways as (\ref{Rlowerbound})) in to account, we have
\begin{align*}
    \N_{x_0,-1}(1)&\leq \N_{x_0,0}(2)+\left(\frac{n}{2(-1-(-2))}-(-\frac{n}{2})\right)^{\frac{1}{2}}\dist^{g_{-1}}_{W_1}(\nu_{x_0,0\,|\,-1},\delta_{x_0})+\frac{n}{2}\log\left(\frac{0-(-2)}{-1-(-2)}\right)
    \\
    &\leq \N_{x_0,0}(2)+C(n)+C(n,A)\left(-\N_{x_0,0}(2)+1\right)^{\frac{1}{2}}
    \\
    &\leq \N_{x_0,0}(2)+C(n)+\frac{1}{2}\left(-\N_{x_0,0}(2)+1\right)+C(n,A)
    \\
    &\leq \frac{1}{2}\N_{x_0,0}(2)+C(n,A).
\end{align*}
Hence, applying \cite[Theorem 8.1]{Bam20a} in the same way as in (\ref{X}), we have
\begin{align*}
    \N_{x_0,0}(2)\geq 2\N_{x_0,-1}(1)-C(n,A)\geq -C(n,A),
\end{align*}
and the rest of the proof is but a straightforward application of Theorem \ref{thm: nu ge nash}.
\end{proof}

\subsection{Pseudolocality theorem}

In this subsection, we present the proof of Theorem \ref{BWang2} using our local monotonicity theorem. The proof goes by contradiction. We show that under the assumption of Theorem \ref{BWang2}, a point at which (\ref{pseudo2}) fails has very small Nash entropy, and this is a contradiction. To this end, we need the following preparatory result due to Bamler.

\begin{Lemma}[{\cite[Theorem 10.3]{Bam20a}}]\label{lem:regularity}
For any $\varepsilon>0$ there is $\delta=\delta(n,\varepsilon)>0$, such that the following holds. Let $(x,t)\in M\times I$ and $r>0$ satisfy $[t-r^2,t]\subset I$ and $\N_{x,t}(r^2)\geq-\delta$. Then we have
\begin{gather}
    |{\Rm}|\leq \varepsilon r^{-2}\quad \text{ on }\quad B_t(x,\varepsilon^{-1}r)\times([t-(1-\varepsilon)r^2,\varepsilon^{-1}r^2]\cap I),\label{regularity1}
    \\
    \inf_{\rho\in(0,\varepsilon^{-1}r)}\rho^{-n}|B_t(x,\rho)|_t\geq (1-\varepsilon)\omega_n,\label{regularity2}
\end{gather}
where $\omega_n$ is the volume of the $n$-dimensional unit disk in the Euclidean space.
\end{Lemma}

\begin{proof}
(\ref{regularity1}) is the same as the statement of \cite[Theorem 10.3]{Bam20a}, and we show that (\ref{regularity2}) is a straightforward consequence of (\ref{regularity1}).

Without loss of generality, we assume $t=0$ and $r=1$. Let us consider a sequence of counter examples $(M_i^n,g_{i,t},x_i)_{t\in[-1,0]}$ with $\N_{x_i,0(1)}=-\delta_i\nearrow 0$, but
\begin{align}\label{regularity3}
\inf_{\rho\in(0,\varepsilon^{-1})}\rho^{-n}|B_{g_{i,0}}(x_i,\rho)|_0\le (1-\varepsilon)\omega_n    
\end{align}
for some fixed $\varepsilon>0$.

By \eqref{regularity1}, we can find a sequence $\varepsilon_i\searrow0$, such that
$$|{\Rm_{g_i}}|\leq \varepsilon_i \quad \text{ on }\quad B_{g_{i,0}}(x_i,\varepsilon_i^{-1})\times[-1+\varepsilon_i,0]$$
and hence we may extract a subsequence from $(M_i,g_{i,t},x_i)$ and obtain a limit Ricci flow. This limit must be the Euclidean space, and this contradicts (\ref{regularity3}).
\end{proof}

Since, to apply Theorem \ref{thm: Nash depends on nu}, we need a global scalar curvature lower bound on the initial slice, which is not available in the assumption of Theorem \ref{BWang2}, we combine Theorem \ref{thm: Nash depends on nu} and Theorem \ref{prop: almost mono} to obtain the following result for the convenience of application.

\begin{Proposition}\label{Prop: Nash bounded by nu}
Assume that $[-r^2,0]\subseteq I$. Then, for any $x_0\in M$, any $H_n$-center $(z,-r^2)$ of $(x_0,0)$, and any $A\ge \underline A(n)$, we have
\begin{align*}
     \mu\left(B_{{-r^2}}\left(z,Ar\right), g_{-r^2},r^2\right)
    \le \N_{x_0,0}\left(\tfrac{1}{2}r^2\right)
    +  C_ne^{-c_nA^2},
\end{align*}
where $\underline A(n)$, $c_n$, and $C_n$ are dimensional constants.
\end{Proposition}

To prove the above proposition we need the following observation, namely, that an $H_n$-center of an $H_n$-center is almost an $H_n$-center; this result is interesting in its own right, since the conclusion is independent of $t_2\in(t_1,t_3)$.

\begin{Lemma}\label{Hngeodesic}
Let $x_1$, $x_2$, $x_3$, $z\in M$ and $t_1$, $t_2$, $t_3\in I$ such that $t_1<t_2<t_3$. Furthermore, assume that $(x_1,t_1)$ is an $H_n$-center of $(x_2,t_2)$, that $(x_2,t_2)$ is an $H_n$-center of $(x_3,t_3)$, and that $(z,t_1)$ is an $H_n$-center of $(x_3,t_3)$. Then we have
$$\dist_{t_1}(x_1,z)\leq C_n\sqrt{t_3-t_1},$$
where $C_n$ is a dimensional constant.
\end{Lemma}

\begin{proof}
Letting $A>\sqrt{2H_n}+4\sqrt{2\pi}$ and $\Lambda>0$, we may argue as the proof of (\ref{XX}) and obtain 
\begin{align*}
    \nu_{x,t_2\,|\,t_1}\left(B_{t_1}\left(x_1,A\sqrt{t_2-t_1}\right)\right)\geq\Phi\left(\tfrac{1}{\sqrt{2}}A-\Lambda-\sqrt{H_n}-4\sqrt{\pi}\right)
\end{align*}
for all $x\in B_{t_2}\left(x_2,\Lambda\sqrt{t_2-t_1}\right)$, where $\Phi$ is the function in the statement of Theorem \ref{HeatHarnack}.

Taking $\Lambda:=\sqrt{2H_n\frac{t_3-t_2}{t_2-t_1}}$, and applying the fact (c.f. \cite[Proposition 3.13]{Bam20a}) that $$\nu_{x_3,t_3\,|\,t_2}\left(B_{t_2}\left(x_2,\sqrt{2H_n(t_3-t_2)}\right)\right)\geq\frac{1}{2},$$ we have
\begin{align*}
    \nu_{x_3,t_3\,|\,t_1}\left(B_{t_1}\left(x_1,A\sqrt{t_2-t_1}\right)\right)&\geq\int_{B_{t_2}\left(x_2,\sqrt{2H_n(t_3-t_2)}\right)}\nu_{\cdot,t_2\,|\,t_1}\left(B_{t_1}\left(x_1,A\sqrt{t_2-t_1}\right)\right)d\nu_{x_3,t_3\,|\,t_2}
    \\
    &\ge \frac{1}{2}\Phi\left(\tfrac{1}{\sqrt{2}}A-\sqrt{2H_n\tfrac{t_3-t_2}{t_2-t_1}}-\sqrt{H_n}-4\sqrt{\pi}\right).
\end{align*}
Finally, taking $A:=\sqrt{4H_n\frac{t_3-t_1}{t_2-t_1}}$, we have
\begin{align*}
    \nu_{x_3,t_3\,|\,t_1}\left(B_{t_1}\left(x_1,\sqrt{4H_n(t_3-t_1)}\right)\right)&\ge \frac{1}{2}\Phi\left(\sqrt{2H_n\tfrac{t_3-t_1}{t_2-t_1}}-\sqrt{2H_n\tfrac{t_3-t_2}{t_2-t_1}}-\sqrt{H_n}-4\sqrt{\pi}\right)
    \\
    &=\Phi\left(\frac{2H_n}{\sqrt{2H_n\tfrac{t_3-t_1}{t_2-t_1}}+\sqrt{2H_n\tfrac{t_3-t_2}{t_2-t_1}}}-\sqrt{H_n}-4\sqrt{\pi}\right)
    \\
    &\geq \Phi\left(-\sqrt{H_n}-4\sqrt{\pi}\right)=:c_n.
\end{align*}
It then follows from \cite[Proposition 3.13]{Bam20a} that
$$\dist_{t_1}(x_1,z)\leq (c_n^{-\frac{1}{2}}+2)\sqrt{H_n(t_3-t_1)}.$$
\end{proof}

\begin{proof}[Proof of Proposition \ref{Prop: Nash bounded by nu}]
By parabolic scaling, let us assume $r=1$. Let $(y,-1/2)$ be an $H_n$-center of $(x_0,0)$, and $(z',-1)$ be an $H_n$-center of $(y,-1/2)$. Then, by Lemma \ref{Hngeodesic}, we have
\begin{align}\label{penultnonsense1}
    \dist_{-1}(z',z)\leq C_n,
\end{align}
where $(z,-1)$ is an $H_n$-center of $(x_0,0)$.

By the maximum principle, we have $R_{g_{-1/2}}\geq -n$. Hence, we may apply Theorem \ref{thm: Nash depends on nu} to obtain
\begin{align}\label{penultnonsense2}
    \N_{x_0,0}(1/2)\geq \mu\left(B_{-1/2}\left(y,\tfrac{1}{4}A\right),g_{-1/2},1/2\right)-C_ne^{-c_nA^2},
\end{align}
if $A\geq\underline A(n)$, where $c_n$ and $C_n$ are both dimensional constants. Applying Theorem \ref{prop: almost mono} and (\ref{penultnonsense1}), we have
\begin{align}\label{penultnonsense3}
    \mu\left(B_{-1/2}\left(y,\tfrac{1}{4}A\right),g_{-1/2},1/2\right)&\geq \mu\left(B_{-1}\left(z',\tfrac{1}{2}A\right),g_{-1},1\right)-C_ne^{-c_nA^2}
    \\\nonumber
    &\geq \mu\left(B_{-1}\left(z,A\right),g_{-1},1\right)-C_ne^{-c_nA^2},
\end{align}
if $A\ge\underline A(n)$. The proposition then follows from (\ref{penultnonsense1}) and (\ref{penultnonsense3}).
\end{proof}

\begin{proof}[\textbf{Proof of Theorem \ref{BWang2}}]
Let us first prove (\ref{pseudo2}) by contradiction, since (\ref{pseudo3}) and (\ref{pseudo4}) follow from it. Fixing an $\alpha\in(0,\frac{1}{100n})$, we shall show that under the assumption of (\ref{pseudo1}), if (\ref{pseudo2}) fails, then there is a contradiction arising when $\delta$ is small enough.

Let $\bar t\in(0,T]$ be the first time such that the following statement fails
$$|{\Rm_{g_t}}|(x)< \alpha t^{-1}\quad \text{ for all }\quad t\in(0,\bar t\, ]\ \text{ and }\ x\in \bar B_t\left(x_0,\alpha^{-1}\sqrt{t}\right).$$
Since the Ricci flow is smooth, we must have $\bar t>0$. By the contradictory assumption, we also have $\bar t<T$. Henceforth we shall assume $\bar t=1$ by a parabolic scaling. Then, by the definition of $\bar t$, we have
\begin{align*}
    |{\Rm_{g_t}}|(x)\leq\frac{\alpha}{t}\quad\text{ for all }\quad t\in(0,1]\ \text{ and }\ x\in B_t\left(x_0,\alpha^{-1}\sqrt{t}\right),
\end{align*}
and there exists $y\in \bar B_1(x_0,\alpha^{-1})$, such that 
\begin{align}\label{penultnonsense4}
    |{\Rm_{g_1}}|(y)=\alpha.
\end{align}

Let $z$ be an $H_n$-center of $y$. Then, we may apply the same argument as in the proof of Corollary \ref{BWang} by using the cutoff function of the form
$$\psi(x,t):=\psi\left(\frac{\dist_t(x_0,x)+2\alpha^{-1}\sqrt{t}}{10\alpha^{-1}}\right),$$
where $\psi$ is the function defined in (\ref{BWangcutoff}). This leads to 
\begin{align*}
    \dist_0(z,x_0)\leq C(n,\alpha).
\end{align*}

Now, if $\delta$ is taken to be small enough, such that $\delta^{-1}\gg C(n,\alpha)$, then we may apply Proposition \ref{Prop: Nash bounded by nu} to obtain
\begin{align}\label{penultnonsense5}.
    \N_{y,1}(1/2)&\geq \mu\left(B_0\left(z,\tfrac{1}{2}\delta^{-1}\right),g_0,1\right) -C_ne^{-c_n\delta^{-2}}
    \\\nonumber
    &\ge \mu\left(B_0\left(x_0,\delta^{-1}\right),g_0,1\right) -C_ne^{-c_n\delta^{-2}}
    \\\nonumber
    &\ge -\delta^2-C_ne^{-c_n\delta^{-2}},
\end{align}
where we have applied the assumption (\ref{pseudo1}). Obviously, by Lemma \ref{lem:regularity}, (\ref{penultnonsense5}) contradicts (\ref{penultnonsense4}) when $\delta$ is small enough; this proves (\ref{pseudo2}).

Next, we prove (\ref{pseudo3}) assuming (\ref{pseudo2}), and (\ref{pseudo4}) is a consequence of (\ref{pseudo2}) and (\ref{pseudo3}) 
as a well-known result proved by Cheeger-Gromov-Taylor \cite{CGT82}.
In fact, fixing any $(y,t)\in M\times(0,T]$ such that $\dist_t(x_0,y)\leq\alpha^{-1}\sqrt{t}$. Let $(z,0)$ be an $H_n$-center of $(y,t)$. Given the curvature bound (\ref{pseudo2}), we may apply the same argument as above to obtain 
\begin{align*}
    \dist_0(x_0,z)\leq C(n,\alpha)\sqrt{t}.
\end{align*}
If $\delta$ is taken to be small enough such that $\delta^{-1}\gg C(n,\alpha)$, then we argue in the same way as (\ref{penultnonsense5}) to obtain
\begin{align*}
    \N_{y,t}(\tfrac{1}{2}t)&\geq \mu\left(B_0\left(z,\tfrac{1}{2}\delta^{-1}\sqrt{t}\right),g_0,t\right) -C_ne^{-c_n\delta^{-2}}
    \\\nonumber
    &\ge \mu\left(B_0\left(x_0,\delta^{-1}\sqrt{t}\right),g_0,t\right) -C_ne^{-c_n\delta^{-2}}
    \\\nonumber
    &\ge -\delta^2-C_ne^{-c_n\delta^{-2}},
\end{align*}
where we have also applied (\ref{pseudo1}). (\ref{pseudo3}) then follows from Lemma \ref{lem:regularity}.
\end{proof}

\section{Application to ancient Ricci flows}

In this section, we provide a short proof of Theorem \ref{AncientSobolev} using Theorem \ref{thm: nu ge nash}.
\begin{proof}[\textbf{Proof of Theorem \ref{AncientSobolev}}]
Let us first of all show that
\begin{equation}\label{x1}
    \inf_{t\leq 0}\nu(g_t)\geq \mu_\infty.
\end{equation}
Fixing an arbitrary $t\in (-\infty,0]$ and $x\in M$, we have, by \cite[Proposition 4.6]{MZ21}
$$\lim_{\tau\to\infty}\N_{x,t}(\tau)=\lim_{\tau\to\infty}\N_{x_0,t_0}(\tau)=\mu_\infty.$$
Hence, by the monotonicity of the Nash entropy, we have
$$\N_{x,t}(r^2)\geq\mu_\infty\quad\text{ for all }\quad r>0.$$
Applying Theorem \ref{thm: nu ge nash} at $(x,t)$ with $A=\tau=\epsilon>0$, we have
\begin{align*}
    \nu\left(B_t(x,\epsilon r),g_t,\epsilon r^2\right)&\geq \N_{x,t}(r^2)-\sqrt{n}\epsilon-\frac{n}{2}\epsilon-\frac{n}{2}\log(1+\epsilon)
    \\
    &\geq \mu_\infty -\sqrt{n}\epsilon-\frac{n}{2}\epsilon-\frac{n}{2}\log(1+\epsilon)\quad\text{ for all }\quad r>0.
\end{align*}
By first taking $r\to\infty$ and then taking $\epsilon\to 0$, we have
$$\nu(g_t)\geq\mu_\infty.$$
Since $t\in(-\infty,0]$ is arbitrary, we have proved (\ref{x1}).

Next, we prove 
\begin{equation}\label{x2}
    \inf_{t\leq 0}\nu(g_t)\leq \mu_\infty.
\end{equation}
By the assumption of the theorem, for any $\epsilon >0$, we can find $\tau<\infty$, such that 
$$\N_{x_0,t_0}(\tau)\leq \mu_\infty+\epsilon.$$
Consequently we have
$$\mu_\infty+\epsilon\geq\N_{x_0,t_0}(\tau)\geq \W_{x_0,t_0}(\tau) \geq \nu(g_{t_0-\tau})\geq \inf_{t\leq 0}\nu(g_t).$$
Since $\epsilon$ is arbitrary, (\ref{x2}) follows immediately.
\end{proof}

\bibliographystyle{amsalpha}

\newcommand{\etalchar}[1]{$^{#1}$}
\providecommand{\bysame}{\leavevmode\hbox to3em{\hrulefill}\thinspace}
\providecommand{\MR}{\relax\ifhmode\unskip\space\fi MR }
\providecommand{\MRhref}[2]{%
  \href{http://www.ams.org/mathscinet-getitem?mr=#1}{#2}
}
\providecommand{\href}[2]{#2}

\bigskip
\bigskip


\noindent Department of Mathematics, University of California, San Diego, CA 92093, USA
\\ E-mail address: \verb"pachan@ucsd.edu "
\\

\noindent Department of Mathematics, University of California, San Diego, CA 92093, USA
\\ E-mail address: \verb"zim022@ucsd.edu"
\\

\noindent School of Mathematics, University of Minnesota, Twin Cities, MN 55414, USA
\\ E-mail address: \verb"zhan7298@umn.edu"

\end{document}